\newcommand{\supp}{{\rm supp}  }
\def\R{{\mathbb R}}
\def\A{{\mathbb A}}
\def\M{{\mathbb M}}
\def\Z{{\mathbb Z}}
\def\D{{\mathbb D}}
\def\Dq{{\mathbb D}_q}
\def\H{{\mathbb H}}
\def\B{{\mathbb B}}
\def\Bz{{\mathbb B}(z)}
\def\Sz{{\mathbb S}(z)}
\def\Sb{\mathbb S}
\def\F{{\mathbb F}}
\def\X{{\mathbb X}}
\def\<{\langle}
\def\>{\rangle}
\def\P{\mathbb P}
\def\Q{\mathbb Q}
\def\E{\mathbb E}
\def\T{\mathbb T}
\def\0{\mathbb 0}
\def\W{\mathbb W}
\def\1{\underline 1}
\def\Y{\mathbb Y}
\def\var{\mathbb{V}\mathrm{ar}}
\newcommand{\bel}{\begin{equation}\label}
\newcommand{\ee}{\end{equation}}
      \newtheorem{theorem}{Theorem}[section]
       \newtheorem{corollary}[theorem]{Corollary}
       \newtheorem{lemma}[theorem]{Lemma}
       \newtheorem{remark}[theorem]{Remark}
\theoremstyle{definition}
\newtheorem{definition}{Definition}[section]
\newtheorem{example}{Example}[section]
\title{Infinitesimal generators for a family of polynomial processes \\ - an algebraic approach}
\author{Jacek Weso\l{}owski, Agnieszka Zięba}
\begin{document}
\maketitle
\section*{Abstract}
Quadratic harnesses are time-inhomogeneous Markov polynomial processes with linear conditional expectations and quadratic conditional variances with respect to the past-future filtrations. Typically they are determined by five numerical constants $\eta$, $\theta$, $\tau$, $\sigma$ and $q$ hidden in the form of conditional variances. In this paper we derive infinitesimal generators of such processes in the case $\sigma=0$, extending previously known results. The infinitesimal generators are identified through a solution of a $q$-commutation equation in the algebra $\mathcal Q$ of infinite sequences of polynomials in one variable. The solution is a special element in $\mathcal Q$, whose coordinates satisfy a three-term recurrence and thus define a system of orthogonal polynomials. It turns out that the respective orthogonality measure $\nu_{x,t}$ uniquely determines the infinitesimal generator (acting on polynomials or bounded functions with bounded continuous second derivative) as an integro-differential operator with the explicit kernel, where the integration is with respect to the measure $\nu_{x,t}$.\\

\underline{Key words}:  polynomial processes, quadratic harnesses, infinitesimal generators, orthogonal polynomials, algebra of polynomial sequences, three step recurrence

\section{Introduction and preliminaries}
\noindent Let us consider a Markov process $(X_t)_{t\geq0}$ with transition probabilities $\mathds{P}_{s,t}(x,\mathrm{d}y)$ for $(s,t)\in\Gamma:=\{(r,u):0\leq r\leq u\}$ and $x\in\supp(X_s)\subseteq\mathds{R}$. Following \cite{hurtado} and \cite{HS}, we say that $(X_t)_{t\geq0}$ is an $m$-polynomial process, $m\in\mathds{N} \cup\{0\}$, if for all $k\in\{0,1,\ldots,m\}$ and any polynomial $f$ of degree at most $k$ the following two conditions hold:
\begin{equation}\label{wart_oczekiwana}
        \mathds{E}(f(X_t)|X_s=x)=\int_\mathds{R}f(y)\mathds{P}_{s,t}(x,\mathrm{d}y)
    \end{equation}
   is a polynomial in variable $x$ of degree at most $k$, and 
    \begin{equation}\label{funkcja}
        (s,t)\mapsto\mathds{E}(f(X_t)|X_s=x)
    \end{equation}
    is in $\mathcal{C}^1(\Gamma)$, i.e., it is a continuously differentiable function in the interior of $\Gamma$, and there exist continuous extensions of this function and its derivatives to the boundary.

\begin{definition}\label{definicja_proces_wiel}
If $(X_t)_{t\geq0}$ is $m$-polynomial process for all $m\geq0$, then it is called a polynomial process.
\end{definition}
\noindent Polynomial processes were first introduced by Cuchiero \cite{Cuchiero} in a time-homogeneous case (then in the second point of the definition of the $m$-polynomial process it is enough to assume that \eqref{funkcja} is in $\mathcal{C}(\Gamma)$ instead of $\mathcal{C}^1(\Gamma)$). Applications of polynomial processes in financial and insurance mathematics, see e.g.  \cite{cuchiero_keller-ressel_teichmann_2012} triggered intensive studies of these processes in recent ten years. For recent extensions of polynomial processes to more abstract settings, see  \cite{cuchiero_larsson_salvuto-ferro_2019}, \cite{cuchiero-svaluto}, \cite{benth-detering-kruhner} and \cite{benth-detering-kruhner0}. 

In the above-mentioned references, much effort has been devoted to studying the properties of infinitesimal generators of polynomials processes, in particular, to relate them to the martingale problem in order to simplify the calculation of some expectations. In this context, giving explicit formulas for infinitesimal generators for the broadest possible class of polynomial processes is of considerable interest.

In this paper, we study a wide sub-class of polynomial processes called quadratic harnesses, which were introduced in \cite{bib_BrycMatysiakWesolowski}. Quadratic harnesses are square-integrable Markov processes with conditional expectations and conditional variances with respect to past-future sigma fields in a  linear and quadratic form, respectively.
More precisely, we assume that $(X_t)_{t\geq0}$ is a separable square-integrable stochastic process such that 
\begin{equation}\label{srednia+cov}
\mathds{E}X_s=0 \quad \textnormal{ and } \quad  \mathds{E}X_tX_s=\min\{s,t\} \quad \textnormal{ for } s,t\geq0.
\end{equation}
 Let $\mathcal{F}_{s,u}:=\sigma\{X_t: t\in[0,s]\cup[u,\infty)\}$, $(s,u)\in\Gamma$, be a natural past-future  filtration generated by the process. We say that $(X_t)_{t\geq0}$ is a quadratic harness if for all $0\leq s<t<u$ we have
\begin{equation}\label{def_harness}
\mathds{E}\big(X_t|\mathcal{F}_{s,u}\big)=a_{tsu}X_s+b_{tsu}X_u
\end{equation}
and
\begin{equation}\label{eq:kwadrat}\mathds{E}\big(X_t^2|\mathcal{F}_{s,u}\big)=A_{tsu}X_s^2+B_{tsu}X_sX_u+C_{tsu}X_u^2+D_{tsu}X_s+E_{tsu}X_u+F_{tsu},\end{equation}
where $a_{tsu}$, $b_{tsu}$, $A_{tsu}, \ldots F_{tsu}$ are deterministic functions depending only on $0\leq s<t<u$. In view of \eqref{srednia+cov} it is easy to see that $a_{tsu}=\tfrac{u-t}{u-s}$ and $b_{tsu}=\tfrac{t-s}{u-s}$.  It is well-known, see \cite[Theorem 2.2]{bib_BrycMatysiakWesolowski}, that under mild technical assumptions $A_{tsu},\ldots,F_{tsu}$ are explicitly identified in terms of five numerical constants 
\begin{equation}\label{par_QH}
\eta, \theta\in\mathds{R}\quad  \sigma\geq0,\quad \tau\geq0\quad \text{ and }\quad q\leq 1+2\sqrt{\sigma\tau}  
\end{equation}
in such a way that
\begin{equation}\label{def_QH}
\begin{split}
\var(X_t|\mathcal{F}_{s,u})=\tfrac{(u-t)(t-s)}{\sigma su+u-qs+\tau}K\left(\tfrac{X_u-X_s}{u-s},\tfrac{uX_s-sX_u}{u-s}\right),
\end{split}
\end{equation}
where $K(x,y)=1+\eta y+\theta x+\sigma y^2+\tau x^2-(1-q)xy$. Typically, the distribution of a quadratic harness process is uniquely determined by conditions \eqref{srednia+cov}, \eqref{def_harness} and \eqref{eq:kwadrat}, and thus by the five constants in \eqref{par_QH}, compare with a discussion after Theorem 2.4 in \cite{bib_BrycMatysiakWesolowski} and the construction of quadratic harnesses \cite{bib_BrycWesolowski_2010} to see that the supports are indeed compact in most cases. Therefore, we will write $(X_t)_{t\geq0}\sim QH(\eta,\theta;\sigma,\tau;q)$ referring to the quadratic harness with the corresponding parameters. Well-known examples of quadratic harnesses are Wiener, Poisson, or Gamma processes. This class also includes classical versions of the free Brownian motion (see \cite{Biane_1998}), the $q$-Gaussian process (see \cite{Bozejko_Kummerer_Speicher_1997}), and the $q$-L\'evy-Meixner process (see \cite{Anshelevich_2004}), the radial part of the quantum Bessel process (see \cite{Biane_1998_Q}). It also contains a wide family of Askey-Wilson processes introduced in \cite{bib_BrycWesolowski_2010}. The latter is of special interest due to its relation to the ASEP (asymmetric simple exclusion process) with open boundaries through the representation of the generating function of the stationary law of the ASEP through joint moments of the Askey-Wilson (or quadratic harness) processes derived in \cite{bib_BrycWesolowski_2017}. The representation was one of the major tools recently used in \cite{CK} (see also \cite{C22}) to identify the multipoint Laplace transform of the stationary measure for the open KPZ equation, as well as to identify Brownian excursions and Brownian meanders as limiting processes in ASEPs of increasing sizes in \cite{BrycWang_2017}. Moreover, in \cite{bib_BrycWesolowski_2017}, the representation combined with the form of the infinitesimal generator of the respective quadratic harness allowed us to derive the formula for the so-called profile density for the ASEP. 

In this paper we will consider quadratic harnesses with $\sigma=0$ (the case when $\sigma\neq 0$ looks technically much more involved and needs a separate investigation). Then $\mathds{E}|X_t|^r<\infty$ for all $r,t>0$ by \cite[Theorem 2.5]{bib_BrycMatysiakWesolowski} and therefore $(X_t)_{t\geq0}$ is an uniquely determined Markov process satisfying conditions \eqref{srednia+cov}, \eqref{def_harness} and \eqref{def_QH}.
Since $\lim\limits_{u\to\infty}\tfrac{X_u}{u}=0$ a.s., see \cite[(2.9)]{bib_BrycMatysiakWesolowski}, we have from \eqref{def_harness} and \eqref{def_QH} 
\begin{equation}\label{martyngal}
    \mathds{E}(X_t|\mathcal{F}_s)=X_s \quad \text{(martingale property)}
\end{equation}
and
\begin{equation}\label{drugi_moment}
    \mathds{E}(X_t^2|\mathcal{F}_s)=X_s^2+\eta(t-s)X_s+(t-s),
\end{equation}
where $(\mathcal{F}_s)_{s\geq0}$ is a natural filtration associated to this process. Moreover, one can calculate all conditional moments $\mathds{E}(X_t^n|\mathcal{F}_s)$ and show that they are polynomials in variable $X_s$ of degree at most $n$, $n\in\mathds{N}$. As a result, $(X_t)_{t\geq0}$ is a time-inhomogeneous polynomial process according to  Definition \ref{definicja_proces_wiel}. 

There is an analogous situation regarding conditioning with respect to the future of the process, i.e., $\mathds{E}(X_t^n|\mathcal{F}_{0,u})$ are also polynomials in variable $X_u$ of degree at most $n$, $n\in\mathds{N}$, so quadratic harnesses are polynomial processes not only with respect to the past but also with respect to the future of the process.

Our main goal in this paper is to derive explicit formulas for infinitesimal generators of $QH(\eta,\theta;0,\tau;q)$. It turns out that infinitesimal generators of quadratic harnesses acting on polynomials (or on a bounded continuous function with bounded continuous second derivative)  can be represented as integro-differential operators, where the integrals are taken with respect to the orthogonality measure of a concrete system of orthogonal polynomials, which is identified through its Jacobi matrix.

Since quadratic harnesses are non-homogeneous Markov processes,   infinitesimal generators are indexed by the time variable $t\ge 0$. To recall the general definition, denote by $\{\mathds{P}_{s,t}(x,\mathrm{d}y): x\in\mathds{R}, 0\leq s <t\}$ the transition probabilities of a non-homogeneous Markov process. The weak left infinitesimal generator is defined by
\begin{equation*}
	\mathbf{A}_t^-f(x):=\lim\limits_{h\to 0^+}\int_\mathds{R}\tfrac{f(y)-f(x)}{h}\mathds{P}_{t-h,t}(x,\mathrm{d}y)
\end{equation*}
for $t>0$ and $f$ such that this pointwise limit exists. 
Analogously, the weak right infinitesimal generator is given by
\begin{equation}\label{gen_1_wstep}
    \mathbf{A}_t^+f(x):=\lim\limits_{h\to 0^+}\int_\mathds{R}\tfrac{f(y)-f(x)}{h}\mathds{P}_{t,t+h}(x,\mathrm{d}y).
\end{equation}
Typically,  for a quadratic harness $(X_t)_{t\ge 0}$ there  exists a family  of martingale orthogonal polynomials $(p_n(\cdot,t))_{n\ge 0}$, $t\ge 0$, i.e., for any $n\ge 0$
$$
\mathds E(p_n(X_t,t)|\mathcal F_s)=p_n(X_s,s),\quad 0\le s<t
$$
and for any $t\ge 0$
$$
\mathds E\,p_n(X_t,t)p_m(X_t,t)=\kappa_n\,\delta_{n=m},\quad n,m\ge 0.
$$
An explicit three-term recurrence for polynomials $(p_n(\cdot,t))_{n\ge 0}$ is given in \cite[Theorem 4.5]{bib_BrycMatysiakWesolowski}.  
Moreover, $\mathbf A_t^\pm(p_n(x,t))=-\tfrac{\partial p_n(x,t)}{\partial t}$, which in particular means that on the domain of polynomials $\mathbf A_t^+$ and $\mathbf A_t^-$ coincide - for details see \cite[Section 1.4]{bib_BrycWesolowski_2015}. Therefore we will use the same symbol $\mathbf A_t$ for both of them in the sequel. Furthermore, considering a Banach space of polynomials up to degree $m\in\mathds{N}\cup\{0\}$ with the proper norm,  \cite[Proposition 2.2.10]{hurtado} says that  point-wise convergence \eqref{gen_1_wstep} for polynomial $f$ implies also convergence in norm.

Hurtado \cite[Lemma 2.2.8]{hurtado} proved that for polynomial processes the infinitesimal generator \eqref{gen_1_wstep} has the  form
$$\mathbf{A}_t^+f(x)=\sum\limits_{l=0}^k\tfrac{\partial}{\partial t}\alpha_l^f(s,t)x^l|_{s=t},$$
where $x\in\text{supp}(X_t)$, $f$ is a polynomial of degree at most $k\geq0$ and $\alpha_0^f,\ldots, \alpha_k^f$ are coefficients occurring on the right hand side of \eqref{wart_oczekiwana}, i.e., 
$$\mathds{E}(f(X_t)|X_s=x)=\sum\limits_{l=0}^k\alpha_l^f(s,t)x^l.$$

We seek rather for a more explicit formula for the infinitesimal generator of quadratic harnesses. Over the years, there have been several different approaches to deriving explicit formulas for infinitesimal generators of the quadratic harness with different restrictions on the parameters $\eta,\theta,\sigma,\tau,q$, see \cite{biane},  \cite{bib_Anshelevich}, \cite{bib_Bryc}, \cite{bib_BrycWesolowski_2014} (generalized later in \cite{bib_BrycWesolowski_2017}) and \cite{bib_BrycWesolowski_2015}. They all lead to the representation of the infinitesimal generator $\mathbf A_t$ as an integro-differential operator of the form
\begin{equation}\label{ATF}
\mathbf A_t\,f\,(x)=\int_\R\,\tfrac{\partial}{\partial x}\left(\tfrac{f(y)-f(x)}{y-x}\right)\,\nu_{x,t}(dy),
\end{equation}
where $\nu_{x,t}$ is some measure. 
Consequently, to determine  $\mathbf A_t$ one has to identify the measure  $\nu_{x,t}$. In particular, it is easy to check that for the Wiener process the above representation holds with $\nu_{x,t}=\delta_x$.

The methodology we propose in this paper has been inspired by quite distinct approaches from   \cite{bib_BrycWesolowski_2014} and \cite{bib_BrycWesolowski_2015}. We now briefly discuss them.

In \cite{bib_BrycWesolowski_2014}, the authors introduced a system of so-called associated (orthogonal) polynomials to the system of polynomials orthogonal with respect to a measure that is a limiting version of transition probabilities of the quadratic harness. Knowing how the infinitesimal generator acts on martingale polynomials allowed deducing a formula for the infinitesimal generator in terms of the orthogonality measure of the system of the associated polynomials. In this way the measure $\nu_{x,t}$ in \eqref{ATF} for the $q$-Meixner-L\'evy processes, $QH(0,\theta;0,\tau;q)$, was identified. In  \cite{bib_BrycWesolowski_2017}, a similar approach was used for a derivation of $\nu_{x,t}$ in \eqref{ATF} for the bi-Poisson process $QH(\eta,\theta;0,0;q)$. In both cases, $\nu_{x,t}$ was expressed in terms of special transition probabilities of the process.

The approach, proposed in \cite{bib_BrycWesolowski_2015}, refers directly to the fact that quadratic harnesses are polynomial processes. It turns out that if $QH(\eta,\theta;\sigma,\tau;q)$ has all moments, then the infinitesimal generator $\mathbf{A}_t$  can be expressed in terms of a special element of certain non-commutative algebra $\mathcal Q$ of infinite polynomial sequences. To identify this element, i.e., to identify the generator, one needs to solve a $q$-commutation equation in $\mathcal Q$. In  \cite{bib_BrycWesolowski_2015} this $q$-commutation equation was solved in case $q=-\sqrt{\sigma\tau}$. Consequently, the infinitesimal generator for the free quadratic harness $X\sim QH(\eta,\theta;\sigma,\tau;-\sqrt{\sigma\tau})$ was identified in the form \eqref{ATF} with the explicit measure $\nu_{x,t}$,  related to the transition probabilities of the process $X$.

In particular, in this case  the authors were able to postulate a special parametric form of the generator. We expect that such approach is not possible in general. Instead, we develop a more universal algebraic approach incorporating associated polynomials which were used more systematically in  \cite{bib_BrycWesolowski_2014}. As a consequence, the approach we propose not only covers all already known cases when $\sigma=0$, but also allows us to derive the infinitesimal generators in new cases of $\tau>0$ and $\eta\neq 0$. We expect that this method extends to the much harder case of $\sigma>0$, but it is beyond the scope of the present investigations.

Here is our main result.
\begin{theorem}\label{twierdzenie_o_generatorze_wiel}
	Let $\mathbf{A}_t$ be the infinitesimal generator of $QH(\eta,\theta;0,\tau;q)$ at time $t\geq0$. Then for every polynomial $f$ and $x\in\text{supp}(X_t)$ 
	\begin{equation}\label{generator}
	    \mathbf{A}_t f(x)=(1+\eta x)\int\limits_{\mathds{R}}\frac{\partial}{\partial x} \frac{f(y)-f(x)}{y-x}\;\nu_{x,t,\eta,\theta,\tau,q}(\mathrm{d}y),
	\end{equation}
	where $\nu_{x,t,\eta,\theta,\tau,q}$ is the probabilistic  orthogonality measure for $(B_n(\cdot;x,t))_{n\geq0}$ defined through a three-step recurrence:
\begin{equation}\label{rekurencja_na_wiel_ort_B}
\begin{split}
B_{-1}(y;x,t)=&0,\quad\quad B_0(y;x,t)=1,\\
y B_n(y;x,t)=&B_{n+1}(y;x,t)\\
&+\big((\gamma_t+\beta_t([n+1]_q+[n]_q))[n+1]_q+x q^{n+1}\big)B_n(y;x,t) \\
&+\alpha_t\big(1+\eta\gamma_t[n]_q+\eta\beta_t[n]_q^2+x\eta q^{n}\big)[n+1]_q[n]_q B_{n-1}(y;x,t), \quad n\geq 0,
\end{split}
\end{equation}
with 
\begin{equation}\label{oznaczenia}
\alpha_t:=\tau+(1-q)t, \quad \beta_t:=\eta \alpha_t, \quad \gamma_t:=\theta-\eta t.
\end{equation}
\end{theorem}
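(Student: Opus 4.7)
The plan is to merge the algebraic framework of \cite{bib_BrycWesolowski_2015} with the associated-polynomials strategy of \cite{bib_BrycWesolowski_2014}. First I would exploit the identity $\mathbf A_t\bigl(p_n(\cdot,t)\bigr)(x)=-\tfrac{\partial}{\partial t}p_n(x,t)$ for the family of martingale orthogonal polynomials $(p_n(\cdot,t))_{n\ge 0}$ of $QH(\eta,\theta;0,\tau;q)$, whose explicit three-term recurrence is given in \cite[Theorem 4.5]{bib_BrycMatysiakWesolowski}. This determines $\mathbf A_t$ on every polynomial, so the task is to repackage this action as the integro-differential operator on the right-hand side of \eqref{generator}.

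To do so, I would work in the noncommutative algebra $\mathcal Q$ of infinite sequences of polynomials in one variable, following \cite{bib_BrycWesolowski_2015}. The operators of multiplication by $x$ and of differentiation in $t$ lift to elements of $\mathcal Q$ whose commutator, once translated through the recurrence for $p_n(\cdot,t)$, becomes a $q$-commutation equation. I would then look for a \emph{special element} $\mathbf B\in\mathcal Q$ whose $n$-th coordinate is a polynomial $B_n(y;x,t)$ in $y$ depending on the parameters $(x,t)$, such that $\mathbf B$ encodes the kernel of $\mathbf A_t$ against the basis $\bigl(\frac{\partial}{\partial x}\frac{y^n-x^n}{y-x}\bigr)_{n\ge 0}$. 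Substituting this ansatz into the $q$-commutation equation and matching coefficients coordinate-by-coordinate produces exactly the three-step recurrence \eqref{rekurencja_na_wiel_ort_B} together with the coefficients $\alpha_t,\beta_t,\gamma_t$ of \eqref{oznaczenia}; the shifts $x q^{n+1}$ and $x\eta q^n$ arise from the $x$-dependent part of the multiplication operator.

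Once \eqref{rekurencja_na_wiel_ort_B} is established, I would verify that its off-diagonal coefficient $\alpha_t(1+\eta\gamma_t[n]_q+\eta\beta_t[n]_q^2+x\eta q^n)[n+1]_q[n]_q$ is non-negative for $x\in\text{supp}(X_t)$ and all $n\ge 0$. This is where the factor $(1+\eta x)$ in front of the integral enters: writing the leading $n=0$ coefficient as $\alpha_t(1+x\eta)[1]_q[0]_q$ shows that pulling out $(1+\eta x)$ normalizes $B_n(\cdot;x,t)$ to a probability orthogonal polynomial system, and Favard's theorem then provides the probability measure $\nu_{x,t,\eta,\theta,\tau,q}$. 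Finally, I would recover \eqref{generator} by testing both sides on the monomial basis: the integral $\int B_n(y;x,t)\,\nu_{x,t}(dy)=\delta_{n,0}$ combined with the explicit moment expansion of $\frac{\partial}{\partial x}\frac{y^n-x^n}{y-x}$ reproduces $\mathbf A_t(x^n)$ computed from $-\partial_t p_n$, and a density argument together with \cite[Proposition 2.2.10]{hurtado} extends the identity to polynomials and then to bounded functions with bounded continuous second derivative.

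The main obstacle will be the faithful translation of the generator action into the $q$-commutation equation and the consistent identification of its unique solution with the recurrence \eqref{rekurencja_na_wiel_ort_B}: the $q$-number combinatorics $[n]_q,[n+1]_q$ interact nontrivially with the $t$-dependent coefficients $\alpha_t,\beta_t,\gamma_t$ and with the $x$-shifts inherited from the martingale recurrence, so matching coefficients at every level requires careful bookkeeping. A secondary but delicate issue is verifying positivity of the recurrence weights uniformly on $\text{supp}(X_t)$, without which Favard's theorem would not furnish the probability measure $\nu_{x,t,\eta,\theta,\tau,q}$.
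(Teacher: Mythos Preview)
Your outline has the right general shape---work in $\mathcal Q$, solve the $q$-commutation equation, extract a three-term recurrence---but two concrete points would prevent it from going through as written.

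First, your explanation of the factor $(1+\eta x)$ is incorrect. You write that the $n=0$ off-diagonal coefficient is $\alpha_t(1+x\eta)[1]_q[0]_q$ and that ``pulling out $(1+\eta x)$ normalizes $B_n(\cdot;x,t)$ to a probability orthogonal polynomial system.'' But $[0]_q=0$, so that coefficient vanishes identically; nothing is normalized by extracting $(1+\eta x)$ there. In the paper the factor arises structurally, long before the recurrence: one rewrites the $q$-commutation equation \eqref{generator1} for $\H_t$ via the substitution $\H_t=(\E+\eta\F)\widetilde{\H}_t$, obtaining the reduced equation \eqref{tildeH} for $\widetilde{\H}_t$. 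The $(1+\eta x)$ then reappears in the final step because $\A_t=\sum_j\F^j(\E+\eta\F)\widetilde{\H}_t\D^{j+1}$. Without this factorization you will not cleanly isolate the recurrence \eqref{rekurencja_na_wiel_ort_B}, and the $(1+\eta x)$ will remain tangled inside the Jacobi coefficients.

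Second, and more seriously, your plan to ``verify that the off-diagonal coefficient $\alpha_t(1+\eta\gamma_t[n]_q+\eta\beta_t[n]_q^2+x\eta q^n)[n+1]_q[n]_q$ is non-negative for $x\in\mathrm{supp}(X_t)$ and all $n\ge 0$'' is essentially circular: the support of $X_t$ is itself characterized through inequalities of this type (see Section~\ref{specjalne wlasnosci}), and no independent description is available at this stage of the argument. The paper avoids this entirely. It first derives the operator identity \eqref{A_jako_operator} with an a priori only linear moment functional $\mathcal L_{x,t}$, and \emph{then} proves positivity a posteriori by the probabilistic identity
\[
(1+\eta x)\,\mathcal L_{x,t}\bigl(f\bigr)=\lim_{h\to 0^+}\int_{\mathds R} f(y)\,\tfrac{(y-x)^2}{h}\,\mathds P_{t,t+h}(x,\mathrm dy),
\]
which is manifestly $\ge 0$ for $f\ge 0$ and uses only the conditional variance formula \eqref{condvar}. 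This is the step that legitimizes invoking Favard's theorem for a genuine probability measure; your outline lacks any mechanism of comparable strength.

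Finally, the verification step ``$\int B_n\,\nu_{x,t}=\delta_{n,0}$ combined with the moment expansion of $\tfrac{\partial}{\partial x}\tfrac{y^n-x^n}{y-x}$ reproduces $\mathbf A_t(x^n)$'' is too loose: orthogonality of the $B_n$ gives you the moments of $\nu_{x,t}$, but you still need an identity linking those moments to the coordinates of $\A_t$. In the paper this link is Corollary~\ref{Bzx}, which shows that the moment sequence of $\nu_{x,t}$ equals $\widetilde{\M}=\widetilde{\H}\F-\F\widetilde{\H}$ evaluated at $z=x$, and then the series $\widetilde{\H}=\sum_k\F^k\widetilde{\M}\D^{k+1}$ closes the loop. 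Some analogue of this bridge is indispensable; merely testing on monomials does not supply it.
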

\noindent Above we have used a $q$-notation:
\begin{equation}\label{qnotacja}
   [n]_q=1+q+\ldots+q^{n-1} \text{ for } n\geq1 \text{ and, by  convention, } [0]_q=0.
\end{equation}

The rest of the paper is organized as follows. In the next section we give an overview of an algebra $\mathcal Q$ of polynomial sequences, introduce some special elements of $\mathcal Q$ and analyze their properties. In Section \ref{Sekcja_o_generatorze} we attempt to solve a $q$-commutation  equation in $\mathcal Q$, which is crucial for the identification of the measure $\nu_{x,t}$ in  \eqref{ATF}. To do this, we introduce and carefully examine a  $\mathcal Q$-valued  function $\B$ of real arguments. In Section \ref{dowod_generator} we give a proof of Theorem \ref{twierdzenie_o_generatorze_wiel} and discuss its conclusions.  In Section \ref{specjalne wlasnosci}, we analyze properties of $\nu_{x,t}$ and, in special cases, we relate this measure to transition probabilities of the bi-Poisson process introduced in \cite{bib_BrycMatysiakWesolowski_1}.

\section{Algebra $\mathcal{Q}$ of polynomial sequences}
The algebra of polynomial sequences $\mathcal Q$ was introduced in  \cite{bib_BrycWesolowski_2015} in order to study the properties of polynomial processes. It is defined as a linear space of all infinite sequences of polynomials in a real variable $x$ with a non-commutative multiplication $\mathbb{R}=\mathbb{P}\mathbb{Q}$ for $\mathbb{P}=(P_0,P_1,\ldots)$, $\mathbb{Q}=(Q_0,Q_1,\ldots)$ and $\mathbb{R}=(R_0,R_1,\ldots) \in \mathcal{Q}$ given by
\begin{equation}\label{def_mnozenia}
R_k(x)=\sum\limits_{j=0}^{\textnormal{deg}(Q_k)}[Q_k]_jP_j(x),\quad k\geq 0,
\end{equation}
where $[Q_k]_j$ is the coefficient of $x^j$ in the polynomial $Q_k$. Note that $\mathcal{Q}$ is an algebra with identity $$\E=(1,x,x^2,x^3,\ldots).$$ 
For $a\in\mathds R$ we denote $\E_a=(1,a,a^2,\ldots)$, i.e. all coordinate polynomials (as functions of the generic variable $x$) are of degree zero. Note that for $\P\in\mathcal Q$ we have
\begin{equation}\label{Ea}
\E_a\P=\P|_{x:=a}=(P_0(a),P_1(a),\ldots).
\end{equation}
If $\mathrm{deg}\,P_n=n$ for all $n\ge 0$, then $\mathbb{P}=(P_0,P_1,\ldots)$ is invertible in $\mathcal Q$, see \cite[Proposition 1.2.]{bib_BrycWesolowski_2015}. In the remark below, we give an explicit formula for the inverse of $\E+\X$, where $\X\in\mathcal Q$ satisfies some special conditions.
\begin{remark}\label{uwaga_o_odwrotnosciach_wstep}
    Let $\X\in\mathcal{Q}$ be such that its $n$th coordinate polynomial is of degree at most $n-1$ for $n\geq0$ (where $0$ is a polynomial of degree $-1$). Then $\E+\X\in\mathcal Q$ is invertible and \begin{equation}\label{odwrotka}
    (\E+\X)^{-1}=\sum\limits_{k=0}^\infty(-\X)^k.
    \end{equation}
\end{remark}
\begin{proof}
	Note that the infinite sum on the right-hand side of \eqref{odwrotka} is a well-defined element of $\mathcal Q$ due to the assumption about the degrees of the coordinate polynomials of $\X$. Indeed, \eqref{def_mnozenia} implies that the $n$th coordinate polynomial of $\X^k$ has a degree at most $n-k$ if $0\leq k<n$ or $-1$ otherwise. Hence the sum is finite coordinate-wise. Furthermore,
	$$(\E+\X)\sum\limits_{k=0}^\infty(-\X)^k=\sum\limits_{k=0}^\infty(-\X)^k-\sum\limits_{k=0}^\infty(-\X)^{k+1}=\E,$$
	$$\sum\limits_{k=0}^\infty(-\X)^k(\E+\X)=\sum\limits_{k=0}^\infty(-\X)^k-\sum\limits_{k=0}^\infty(-\X)^{k+1}=\E.$$
\end{proof}

We single out two elements of $\mathcal Q$:
$$\D:=(0,1,x,x^2,\ldots)\qquad \mbox{and}\qquad \F:=(x,x^2,x^3,\ldots),$$
which will play a basic role in the sequel. It is easy to verify that 
\begin{equation}\label{DF=E}
\D\F=\E,
\end{equation}
but $\E-\F\D=(1,0,0,\ldots)$, so $\D$ and $\F$ do not commute.

 Furthermore, for $q\in[-1,1]$ denote
\begin{equation}\label{def_Dq}
\Dq:=\sum\limits_{k=0}^\infty q^k\F^k\D^{k+1},
\end{equation}
an element of $\mathcal Q$, which is important in the analysis below. Clearly, $\D_0=\D$. Note that $\Dq$ can be written in terms of q-notation as $$\Dq=([0]_q,[1]_q,[2]_q x,[3]_q x^2,\ldots),$$
recall \eqref{qnotacja}. When applied from the left to $\mathbb P\in\mathcal Q$  it acts coordinate-wise as the $q$-derivative. In particular, for $q=1$ we have
\begin{equation}\label{D1}
\D_1:=\sum\limits_{k=0}^\infty \F^k\D^{k+1},
\end{equation}
and $\D_1$ represents the classical derivative. Moreover, $\Dq$ satisfies the following identities:
\begin{equation}\label{wlasnosci_Dq_1}
   \Dq(\E-\F\D)=\0,
\end{equation}
\begin{equation}\label{wlasnosci_Dq_2}
    \Dq\F=q\F\Dq+\E.
\end{equation}
The former, \eqref{wlasnosci_Dq_1}, is satisfied due to \eqref{def_Dq} and \eqref{DF=E}. The latter, \eqref{wlasnosci_Dq_2}, follows from
$$\Dq\F-q\F\Dq=\sum\limits_{k=0}^\infty q^k\F^k\D^k-\sum\limits_{k=0}^\infty q^{k+1}\F^{k+1}\D^{k+1}=\E.$$

For future use, it will be convenient to introduce some identities involving additional special elements of $\mathcal Q$ that will be important in the derivation of the Jacobi matrix of the orthogonal polynomials related to the measure building up the infinitesimal generators we search for.
\begin{lemma}
	For $\beta\in \mathds{R}$ and
	\begin{equation*}
      \begin{split}
        \W_1 &:= \E+\beta\Dq\F\Dq,\\
        \W_3 &:= \E+q\beta\F\Dq^2,
      \end{split}
    \qquad\qquad
      \begin{split}
        \W_2 &:= \E+\beta\F\Dq^2,\\
        \W_4&:= \E+\beta\Dq^2\F
      \end{split}
    \end{equation*}
the following identities hold 
	\begin{multicols}{2}
		\begin{enumerate}[label=(\roman*)]
		    \item $\W_1\Dq=\Dq\W_2$,
			\item $\Dq\W_1=\W_4\Dq$,
			\item $\Dq^2\W_2=\W_4\Dq^2$,
			\item $\W_1=\W_3+\beta\Dq$,
			\item $\Dq\F\W_3=\E+q\W_2\F\Dq$,
			\item $\W_2\W_3=\W_1(\W_2-\beta\Dq)$.
		\end{enumerate}
	\end{multicols}
	\label{lemat_wlasnosci_elementow_algebry}
\end{lemma}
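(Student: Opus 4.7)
The plan is to derive all six identities as purely algebraic consequences of the $q$-commutation relation $\Dq\F=q\F\Dq+\E$ recorded in \eqref{wlasnosci_Dq_2}. Since each $\W_i$ is by construction a short polynomial in $\F$ and $\Dq$, I expect that expanding both sides of each identity according to the definitions and then commuting $\Dq$ past $\F$ whenever the two appear in the ``wrong'' order will close the argument. No further structural fact about $\mathcal Q$ is needed.

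First I would dispatch (i), (ii), (iii). These three require no commutation at all: both sides of (i) expand directly to $\Dq+\beta\Dq\F\Dq^2$, both sides of (ii) expand to $\Dq+\beta\Dq^2\F\Dq$, and both sides of (iii) expand to $\Dq^2+\beta\Dq^2\F\Dq^2$. Identity (iv) is essentially \eqref{wlasnosci_Dq_2} multiplied on the right by $\Dq$: subtracting, $\W_1-\W_3=\beta\Dq\F\Dq-q\beta\F\Dq^2=\beta(\Dq\F-q\F\Dq)\Dq=\beta\Dq$, where the last equality uses \eqref{wlasnosci_Dq_2}.

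For (v), my plan is to expand $\Dq\F\W_3=\Dq\F+q\beta\Dq\F\cdot\F\Dq^2$ and apply \eqref{wlasnosci_Dq_2} once on the outer $\Dq\F$ and then a second time on the inner $\Dq\F$ sitting inside $\Dq\F^2$. Doing the same expansion for the right-hand side $\E+q\W_2\F\Dq=\E+q\F\Dq+q\beta\F\Dq^2\F\Dq$ and applying \eqref{wlasnosci_Dq_2} inside $\Dq^2\F=\Dq(q\F\Dq+\E)$ should produce the same expression. A convenient way to bookkeep the comparison is to check that after cancelling the obvious matching terms it reduces to the identity $\Dq\F^2\Dq^3=\F\Dq^2\F\Dq^2$, which itself follows from applying \eqref{wlasnosci_Dq_2} twice in either order. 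For (vi), I would first use (iv) to rewrite $\W_3=\W_1-\beta\Dq$, so the claim becomes $\W_2\W_1-\W_1\W_2=\beta(\W_2-\W_1)\Dq$. Expanding the commutator on the left gives $\beta^2(\F\Dq^2\cdot\Dq\F\Dq-\Dq\F\Dq\cdot\F\Dq^2)$, which one reduces by repeated use of \eqref{wlasnosci_Dq_2} to match $\beta^2(\F\Dq^2-\Dq\F\Dq)\Dq=-q\beta^2\F\Dq^3$ on the right.

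The main obstacle is not conceptual but combinatorial: in (v) and especially in (vi), the quadratic-in-$\beta$ terms contain three adjacent factors of the form $\F\cdots\Dq\cdots\F\cdots\Dq$, and when one pushes $\Dq$ rightward using \eqref{wlasnosci_Dq_2} the powers of $q$ proliferate and must be tracked carefully so that the cancellations happen cleanly. Once that bookkeeping is done systematically — say, always reducing words to the normal form in which all $\F$'s precede all $\Dq$'s — all six identities fall out uniformly.
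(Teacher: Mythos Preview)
Your treatment of (i)--(iv) matches the paper's exactly, and your strategy for (v) is the paper's as well, though after cancelling $\Dq\F=\E+q\F\Dq$ the residual identity is $\Dq\F^2\Dq^2=\F\Dq^2\F\Dq$, not $\Dq\F^2\Dq^3=\F\Dq^2\F\Dq^2$; the paper records the equivalent form $\Dq\F^2\Dq=\F\Dq^2\F$, obtained by two applications of \eqref{wlasnosci_Dq_2}.

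For (vi) your brute-force commutator approach is valid in principle, but the final simplification is wrong: from $\Dq\F=q\F\Dq+\E$ one gets $\F\Dq^2-\Dq\F\Dq=(1-q)\F\Dq^2-\Dq$, so $\beta(\W_2-\W_1)\Dq=\beta^2\big((1-q)\F\Dq^3-\Dq^2\big)$, not $-q\beta^2\F\Dq^3$. Carrying the left-hand commutator through to normal form gives the same expression, so the argument closes once this slip is corrected. The paper avoids this bookkeeping entirely by noticing that $\W_2$ and $\W_3$ commute (both are polynomials in the single element $\F\Dq^2$), whence
\[
\W_2\W_3=\W_3\W_2=(\W_1-\beta\Dq)\W_2=\W_1\W_2-\beta\Dq\W_2=\W_1\W_2-\beta\W_1\Dq=\W_1(\W_2-\beta\Dq),
\]
using (iv) in the second step and (i) in the fourth. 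This is a one-line proof with no $q$-tracking; your normal-form reduction reaches the same conclusion but at the cost of the combinatorics you flag as the main obstacle.
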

\begin{proof}
	Identities $(i)$ and $(ii)$ follow directly from the definitions of $\W_1$, $\W_2$ and $\W_4$, while $(iii)$ is a combination of $(i)$ and $(ii)$. Identity $(iv)$ is an immediate consequence of  \eqref{wlasnosci_Dq_2}. To see $(v)$, note that from \eqref{wlasnosci_Dq_2} we have
	$\Dq\F^2\Dq=\F\Dq(q\F\Dq)+\F\Dq=\F\Dq^2\F$ and $$\Dq\F\W_3=\Dq\F+q\beta\Dq\F^2\Dq^2=\E+q\F\Dq+q\beta\F\Dq^2\F\Dq=\E+q\W_2\F\Dq.$$
	Finally we use $(iv)$ and $(i)$ to show $(vi)$ as follows
	\begin{equation*}
	\W_2\W_3=\W_3\W_2=\big(\W_1-\beta\Dq\big)\W_2=\W_1\W_2-\beta\Dq\W_2=\W_1(\W_2-\beta\Dq).
	\end{equation*}
\end{proof}

\begin{remark}\label{uwaga_o_odwrotnosciach}
    Elements $\W_i$, $i=1,2,3,4$, are invertible and their inverses can be expressed by \eqref{odwrotka}, where $\X$ is equal $\beta\Dq\F\Dq$, $\beta\F\Dq^2$, $q\beta\F\Dq^2$, $\beta\Dq^2\F$, respectively.
\end{remark}
The above follows from Remark \ref{uwaga_o_odwrotnosciach_wstep}, since the $n$-th coordinate polynomials of $\beta\Dq\F\Dq$, $\beta\F\Dq^2$, $q\beta\F\Dq^2$, $\beta\Dq^2\F$  is equal to  $\beta[n]_q^2x^{n-1}$, $\beta[n]_q[n-1]_qx^{n-1}$, $q\beta[n]_q[n-1]_qx^{n-1}$, $\beta[n+1]_q[n]_qx^{n-1}$, respectively, for  $n\geq0$ (recall that $[0]_q=0$). 

The elements $\W_1$, $\W_2$, and $\Dq$ are basic building blocks of more complicated elements of $\mathcal{Q}$, which are important for our considerations. 
For  real coefficients $\alpha,\,\beta,\,\gamma$ let us define 
\begin{equation}\label{def S}
\Sz:=\R+z(\D-\Q),\quad z\in\mathds R,
\end{equation}
where 
\begin{align}\label{def R}
    &\R:=(\W_1+\gamma\Dq)\W_2+\alpha\Dq^2,\\
    &\Q:=(1-q)\Dq\W_2-\beta\Dq^2\label{def Q}.
\end{align}
As we will see later, $\Sz$ plays the role of the Jacobi matrix of a system of orthogonal polynomials that will allow us to identify the orthogonality measure that is the basic ingredient of the infinitesimal generator of the process. In the lemma and its corollary we present two identities satisfied by $\Sz$, $\R$, and $\Q$.
\begin{lemma}\label{lemat_o_komutacji_R_i_Q} 
	For $\X\in\{\R,\Q\}$ we have
	\begin{equation}\label{RQRQ}\Dq\W_1^{-1}\X\F\W_2\W_3=\X(\E+q\F\Dq\W_2).
	\end{equation}
\end{lemma}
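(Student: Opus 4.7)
The plan is to reduce \eqref{RQRQ} to a compact ``master'' identity that does not mention $\X$, and then to re-introduce $\X$ by means of a uniform structural decomposition valid for both $\R$ and $\Q$. First I would apply identity $(ii)$, which gives $\Dq\W_1^{-1}=\W_4^{-1}\Dq$. Next, using $(i)$ in the form $\Dq\W_2=\W_1\Dq$, both elements rewrite in the common shape $\X=\W_1 Z_1+Z_2\Dq^2$ with a scalar $Z_2$: explicitly,
$\R=\W_1(\W_2+\gamma\Dq)+\alpha\Dq^2$ and $\Q=\W_1\cdot(1-q)\Dq-\beta\Dq^2$. Consequently $\W_1^{-1}\X=Z_1+Z_2\W_1^{-1}\Dq^2$, and the left-hand side of \eqref{RQRQ} splits as
\[
\Dq\W_1^{-1}\X\F\W_2\W_3 \;=\; \Dq Z_1\,\F\W_2\W_3 \;+\; Z_2\,\W_4^{-1}\Dq^3\F\W_2\W_3.
\]

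The core step is to establish the master identity
\[
\Dq\F\W_2\W_3 \;=\; \W_2(\E+q\F\Dq\W_2).
\]
For this I first note that $\W_2$ and $\W_3$ commute: from $(iv)$, $\W_3=\W_1-\beta\Dq$; combining with $(vi)$ and $(i)$ gives $\W_2\W_3=\W_1\W_2-\beta\W_1\Dq=\W_1\W_2-\beta\Dq\W_2=\W_3\W_2$. Identity $(v)$ then applies directly: $\Dq\F\W_2\W_3=\Dq\F\W_3\W_2=(\E+q\W_2\F\Dq)\W_2=\W_2(\E+q\F\Dq\W_2)$.

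Iterating by left-multiplying the master identity by $\Dq$ and repeatedly using $(i)$ and $(ii)$, I obtain the companion relations $\Dq^2\F\W_2\W_3=\W_1\Dq(\E+q\F\Dq\W_2)$ and $\Dq^3\F\W_2\W_3=\W_4\Dq^2(\E+q\F\Dq\W_2)$. Substituting these back, the second summand becomes $Z_2\Dq^2(\E+q\F\Dq\W_2)$, since the $\W_4^{-1}$ cancels the $\W_4$ produced by the iterate; and $\Dq Z_1\F\W_2\W_3$ collapses to $\W_1 Z_1(\E+q\F\Dq\W_2)$ in both cases---for $\R$ one expands $\Dq(\W_2+\gamma\Dq)=\W_1\Dq+\gamma\Dq^2$ and applies the master identity together with its first iterate, while for $\Q$ the single term $(1-q)\Dq^2$ is treated by the first iterate alone. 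Adding the two pieces yields $(\W_1 Z_1+Z_2\Dq^2)(\E+q\F\Dq\W_2)=\X(\E+q\F\Dq\W_2)$, as required.

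The main obstacle I anticipate is spotting the correct uniform decomposition $\X=\W_1 Z_1+Z_2\Dq^2$ that treats $\R$ and $\Q$ symmetrically; once this shape is in place, the rest is an almost mechanical chain of applications of the identities of Lemma~\ref{lemat_wlasnosci_elementow_algebry}. A minor additional observation one needs is the commutativity $\W_2\W_3=\W_3\W_2$, which is not singled out in the lemma but follows at once from $(iv)$ and $(vi)$.
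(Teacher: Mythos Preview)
Your argument is correct. Both your proof and the paper's rest on the same toolkit---identities $(i)$--$(vi)$ and the commutativity $\W_2\W_3=\W_3\W_2$---but organize the bookkeeping differently. The paper observes that $\R$ and $\Q$ are linear in the triple $(\W_1\W_2,\Dq\W_2,\Dq^2)$ and checks a single intertwining relation $\Dq\Y=\W_4\Y\W_2^{-1}\Dq$ for each of these three building blocks (via $(i)$--$(iii)$); this immediately yields $\Dq\X\F\W_2\W_3=\W_4\X\W_2^{-1}\Dq\F\W_2\W_3$, after which one application of $(v)$ and one of $(ii)$ finish the job. You instead factor $\X=\W_1 Z_1+Z_2\Dq^2$ and then iterate your master identity up to $\Dq^3\F\W_2\W_3$, handling the $Z_1$-part case by case for $\R$ and $\Q$. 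The paper's packaging is a bit cleaner---one intertwining relation instead of three iterates, and no case split---while yours makes the role of the ``master'' identity $\Dq\F\W_2\W_3=\W_2(\E+q\F\Dq\W_2)$ more explicit. Incidentally, $\W_2\W_3=\W_3\W_2$ follows instantly from the definitions, since both are of the form $\E+\text{const}\cdot\F\Dq^2$; your derivation via $(iv)$, $(vi)$, $(i)$ is valid but longer than necessary.
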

\begin{proof}
We first note that identity
\begin{equation}\label{dqf}
	\W_4\Y\W_2^{-1}\D_q=\Dq\Y
	\end{equation}
	holds for (1) $\Y:=\W_1\W_2$, (2) $\Y:=\Dq\W_2$ and (3) $\Y:=\D_q^2$.  Case (1) holds due to (i) and (ii), cases (2) and (3) due to (iii), see Lemma \ref{lemat_wlasnosci_elementow_algebry}.
	
	Since $\R$ and $\Q$ are linear in $(\W_1\W_2,\Dq\W_2,\Dq^2)$, we conclude that \eqref{dqf} holds also for $\Y:=\X$. Consequently,
	$$
	\Dq\X\F\W_2\W_3=\W_4\X\W_2^{-1}\D_q\F\W_2\W_3,
	$$
	but  (v) of Lemma \ref{lemat_wlasnosci_elementow_algebry} implies
	$$
	\W_2^{-1}\Dq\F\W_2\W_3=\E+q\F\Dq\W_2.
	$$
	Thus we obtain
	\begin{equation*}
	\Dq\X\F\W_2\W_3=\W_4\X(\E+q\F\Dq\W_2).
	\end{equation*}
	which, due to (ii) of Lemma \ref{lemat_wlasnosci_elementow_algebry} and Remark \ref{uwaga_o_odwrotnosciach}, is equivalent to \eqref{RQRQ}.
\end{proof}
\begin{corollary}\label{wniosek_o_Sz}
   For $z\in\mathds{R}$ we have
    \begin{equation*}
    \Dq\W_1^{-1}\Sz\F\W_2\W_3=\R+q\Sz\F\Dq\W_2.
    \end{equation*}
\end{corollary}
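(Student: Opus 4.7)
The plan is to expand $\Sz = \R + z(\D - \Q)$ by linearity and handle each piece separately. Applying the operator $\mathbb{D}_q\W_1^{-1}(\cdot)\F\W_2\W_3$ term-by-term gives
\begin{equation*}
\Dq\W_1^{-1}\Sz\F\W_2\W_3 = \Dq\W_1^{-1}\R\F\W_2\W_3 + z\,\Dq\W_1^{-1}\D\F\W_2\W_3 - z\,\Dq\W_1^{-1}\Q\F\W_2\W_3.
\end{equation*}

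For the first and third terms, Lemma \ref{lemat_o_komutacji_R_i_Q} applies directly with $\X=\R$ and $\X=\Q$, yielding $\R(\E+q\F\Dq\W_2)$ and $\Q(\E+q\F\Dq\W_2)$ respectively. For the middle term, I would use $\D\F=\E$ from \eqref{DF=E} to collapse it to $\Dq\W_1^{-1}\W_2\W_3$, and then apply identity (vi) of Lemma \ref{lemat_wlasnosci_elementow_algebry}, namely $\W_2\W_3=\W_1(\W_2-\beta\Dq)$, to obtain $\Dq(\W_2-\beta\Dq)=\Dq\W_2-\beta\Dq^2$.

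Collecting everything, the left-hand side becomes
\begin{equation*}
\R + q\R\F\Dq\W_2 + z\,\Dq\W_2 - z\beta\Dq^2 - z\Q - qz\,\Q\F\Dq\W_2.
\end{equation*}
On the other hand, the right-hand side of the corollary, after using $\D\F=\E$ once more inside the factor $q\Sz\F\Dq\W_2 = q\R\F\Dq\W_2 + qz\Dq\W_2 - qz\,\Q\F\Dq\W_2$, equals
\begin{equation*}
\R + q\R\F\Dq\W_2 + qz\,\Dq\W_2 - qz\,\Q\F\Dq\W_2.
\end{equation*}
Subtracting, the identity reduces to showing $(1-q)\Dq\W_2 - \beta\Dq^2 = \Q$, which is precisely the defining equation \eqref{def Q} of $\Q$. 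This closes the proof.

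The only nontrivial maneuver is handling the $z\D$ summand, which falls outside the scope of Lemma \ref{lemat_o_komutacji_R_i_Q}; the key observation is that (vi) of Lemma \ref{lemat_wlasnosci_elementow_algebry} was designed precisely so that the discrepancy $(1-q)\Dq\W_2 - \beta\Dq^2$ coincides with the definition of $\Q$, making the decomposition close up. No cancellation issue arises thanks to the invertibility of $\W_1$ guaranteed by Remark \ref{uwaga_o_odwrotnosciach}.
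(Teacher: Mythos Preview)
Your proof is correct and follows essentially the same approach as the paper: decompose $\Sz$ into its $\R$, $\Q$, and $\D$ pieces, apply Lemma~\ref{lemat_o_komutacji_R_i_Q} to the $\R$ and $\Q$ pieces, collapse the $\D$ piece via $\D\F=\E$ and identity (vi), and recognize the leftover $(1-q)\Dq\W_2-\beta\Dq^2$ as the definition of $\Q$. The paper organizes the algebra slightly differently---it rewrites the left-hand side as the right-hand side minus a bracketed term and shows the bracket vanishes---but the ingredients and logic are identical.
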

\begin{proof}
    Writing $\Sz=(\R-z\Q)+z\D$ and using Lemma \ref{lemat_o_komutacji_R_i_Q} we see that the left hand side above is
    $$\R+q\Sz\F\Dq\W_2-z(\Q+q\D_q\W_2-\D_q\W^{-1}_1\W_2\W_3).$$
    Due to (vi) of Lemma \ref{lemat_wlasnosci_elementow_algebry}, the last term in the bracket is $\D_q\W_2-\beta\D_q^2$. Thus, recalling the definition of $\Q$, we see that the whole bracket vanishes.
\end{proof}

Recall that $X_n$ denotes the $n$th coordinate polynomial of  $\X\in\mathcal{Q}$, $n\geq0$. If $\X\in\mathcal Q$ additionally depends on a parameter $z\in\mathds R$ we write $\X(z)$ as for $\Sz$ above. Then its $n$th coordinate, denoted by $X_n(z)$, is a polynomial in the generic variable $x$ with coefficients depending on $z$, $n\ge 0$. In the sequel, we need to evaluate the product $\X(z)\Y(z)$ at $z:=x$. It is easy to see that even when $\X(z)|_{z:=x},\,\Y(z)|_{z:=x}\in\mathcal Q$ the identity $(\X(z)\Y(z))|_{z:=x}=\X(z)|_{z:=x}\,\Y(z)|_{z:=x}$ may not hold. 
\begin{remark}\label{uwaga_o_funkcjach}
	Let $\X(z),\Y(z)\in\mathcal Q$ for all $z\in\mathds{R}$. Assume that all coefficients of the coordinate polynomials $X_n(z)$ (in the generic variable $x$) are polynomials in $z$, $n\ge 0$. Then  $\X(z)|_{z:=x}\in\mathcal Q$ and 
	$$(\X(z)\Y(z))|_{z:=x}=(\X(z)|_{z:=x}\Y(z))|_{z:=x}.$$
\end{remark}
\begin{proof} By the assumption on the coefficients of the coordinate polynomials of $\X(z)$, we conclude that $X_n(z)|_{z:=x}$, which is the $n$th coordinate of $\X(z)$ evaluated at $z:=x$, is a polynomial in $x$, i.e., $\X(z)|_{z:=x}\in\mathcal Q$.

Note that $Y_n(z)$, the $n$th coordinate polynomial of $\Y(z)$, can be written as $Y_n(z)=\sum\limits_{k=0}^{M_n}y_{k,n}(z)x^k$ for some $M_n\in\mathds{N}\cup\{0\}$ and $(y_{k,n}(z))_{k=0,\ldots,M_n}\in \mathds{R}^{M_n+1}$, $n\ge 0$. Then, by \eqref{def_mnozenia}, the $n$th coordinate of $\X(z)\Y(z)$ is equal $\sum\limits_{k=0}^{M_n}y_{k,n}(z)X_k(z)$ whereas the $n$th coordinate of $\X(x)\Y(z)$ equals $\sum\limits_{k=0}^{M_n}y_{k,n}(z)X_k(x)$. Therefore, considering these two objects as functions of $z$ and inserting $z:=x$ yields the desired equality coordinate-wise.
\end{proof}

\section{Infinitesimal generator as an element of the algebra}\label{Sekcja_o_generatorze}
As already mentioned, $\mathbf{A}_t$ evaluated on polynomials also gives polynomials. Thus, $\mathbf{A}_t$ has the unique representation $\A_t\in\mathcal{Q}$ with the $n$th coordinate equal to $\mathbf{A}_t(x^n)$ for $n\geq0$, compare with \cite[Section 1.4]{bib_BrycWesolowski_2015}. It was also explained in \cite[Theorem 2.1]{bib_BrycWesolowski_2015} that the infinitesimal generator $\A_t$ of the quadratic harness $X\sim QH(\eta,\theta;\sigma,\tau;q)$ can be identified through a solution $\H_t\in\mathcal{Q}$ of the following $q$-commutation equation for $t\geq0$:
\begin{equation}\label{generator1}
\H_t\T_t-q\T_t\H_t=\E+\theta\H_t+\eta \T_t+\tau\H_t^2+\sigma\T_t^2,
\end{equation}
where $\T_t:=\F-t\H_t$, with the initial condition $\H_t(\E-\F\D)=\0$.It has been proved, see \cite[Proposition 2.4]{bib_BrycWesolowski_2015},  that \eqref{generator} has a unique solution $\H_t$ when $\sigma\tau\ne1$. If the solution $\H_t$ is found, the generator $\A_t$ can be recovered from the formula $\H_t:=\A_t\F-\F\A_t$. This plan was successfully realized in \cite[Theorem 2.5]{bib_BrycWesolowski_2015} for the free quadratic harness, i.e., $QH(\eta,\theta;\tau,\sigma;-\tau\sigma)$ and for the classical version of the quantum Bessel process (see \cite{biane_fran}), i.e. $QH(\eta,\theta;0,0;1)$,  in \cite[Section 5.2]{bib_BrycWesolowski_2015}. In the former case, the solution of \eqref{generator1} is given as  
$$
\H_t=\tfrac{1}{1+\sigma t}(\E+\eta \F+\sigma \F^2)\,\phi_t(\D)\D,
$$
where $\phi_t$ is a function satisfying certain quadratic equation (see \cite[Lemma 3.1]{bib_BrycWesolowski_2015}).  In the latter case, the solution of \eqref{generator1}  was shown to be
$$
\H_t=\tfrac{1}{\theta-t\eta}\,(\E+\eta \F)\left(e^{(\theta-t\eta)\D_1}-\E\right),
$$
where $\D_1$, defined by \eqref{def_Dq} with $q=1$, acts as the classical derivative.

In this paper, we extend this algebraic method to all quadratic harnesses with $\sigma=0$. The case of $\sigma\ne 0$ is more difficult because of the term $\sigma\T_t^2$  in the $q$-commutation equation. Indeed, in the argument presented below, one can see that $\sigma\neq 0$ adds a term $\sigma \F^2$ to the equations we deal with, and it triggers additional difficulties that we do not know how to overcome. Therefore we focus on $\sigma=0$, which is already quite complicated. In this case, the construction of such processes was done under some constraints on parameters, see Section \ref{rozdzial_QH_z_tau} in the Appendix. 

Recall that our primary goal is to solve equation \eqref{generator1}. Consider $\widetilde{\H}_t$ satisfying a similar equation 
\begin{equation}\label{tildeH}
\widetilde{\H}_t\F-q\F\widetilde{\H}_t=\E+\gamma_t\widetilde{\H}_t+\alpha_t\widetilde{\H}_t(\E+\eta\F)\widetilde{\H}_t
\end{equation}
with the initial condition $\widetilde{\H}_t(\E-\F\D)=\0$, where $\alpha_t$ and $\gamma_t$ are given in \eqref{oznaczenia}. For such $\widetilde{\H}_t$ let $\H_t=(\E+\eta\F)\widetilde{\H}_t$.  Obviously, $\H_t(\E-\F\D)=\0$. Upon multiplication \eqref{tildeH} from the left by $\E+\eta\F$, we conclude that $\H_t$ satisfies \eqref{generator1}. Thus, to put the above methodology to work, it suffices to find the solution $\widetilde{\H}_t$ of \eqref{tildeH}. 

Fix $t>0$ and let $R(\X):=\E+\gamma_t\X+\alpha_t \X^2+\beta_t\X\F\X$. We rewrite \eqref{tildeH} in the equivalent form
\begin{equation}
\widetilde{\H}_t\F=q\F\widetilde{\H}_t+R(\widetilde{\H}_t).
\label{HfalkaF}
\end{equation}
Let $\Sz$ be as defined in \eqref{def S}-\eqref{def Q} with $(\alpha,\beta,\gamma)=(\alpha_t,\beta_t,\gamma_t)$, see \eqref{oznaczenia}. From this moment on, since $t$ is fixed, we suppress $t$ in subscripts. Consider a function $\B:\mathds R\to\mathcal Q$ satisfying 
\begin{equation}\label{FB}
	\F\Bz=\Bz\Sz\F,
\end{equation}
with initial condition 
\begin{equation}
\label{initial_cond}
\Bz(\E-\F\D)=\E-\F\D.
\end{equation}
Clearly, \eqref{FB} and \eqref{initial_cond} uniquely determine $\Bz$ in terms of $\alpha,\beta,\gamma$.  Indeed, looking coordinate-wise at \eqref{FB}  we get (after simplification) the three-term recurrence \eqref{rekurencja_na_wiel_ort_B} for polynomials $B_n(\cdot;z,t):=B_n(z)$, $n\geq0$, in the generic variable $x\in\mathds{R}$, and these polynomials are the entries of $\Bz$.  Then we obtain the following result inductively:
\begin{corollary}\label{wniosek_o_funkcji_B}
	For any $z\in \mathds R$ the equations \eqref{FB} and \eqref{initial_cond} correctly define $\Bz\in\mathcal Q$.  Its $n$th coordinate, $B_n(z)$, is a monic polynomial of degree $n$ of the generic variable $x\in \mathds R$, $n\geq0$. Consequently, $\Bz$ is invertible. Furthermore, $B_n(z)$ is also a polynomial of variable $z\in\mathds{R}$, $n\ge 0$.
\end{corollary}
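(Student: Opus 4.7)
The plan is to reduce \eqref{FB} to a coordinate-wise three-term recurrence and then proceed by induction on $n$. Using the multiplication rule \eqref{def_mnozenia}, the $n$th coordinate polynomial of $\mathbb{F}\mathbb{P}$ is obtained from that of $\mathbb{P}$ by multiplication by the generic variable; hence the $n$th coordinate of the left-hand side of \eqref{FB} is (generic variable)$\cdot B_n(z)$. For the right-hand side I would first identify $\mathbb{S}(z)\mathbb{F}$ as a \emph{tridiagonal} element of $\mathcal Q$: unpacking the definitions \eqref{def S}--\eqref{def Q} in terms of $\mathbb{W}_1,\mathbb{W}_2,\mathbb{D}_q,\mathbb{F},\mathbb{D}$ and applying \eqref{wlasnosci_Dq_1}--\eqref{wlasnosci_Dq_2} together with \eqref{DF=E}, plus the fact that $\mathbb{D}_q$ acts as a $q$-derivative while $\mathbb{F}$ acts as multiplication by the generic variable, shows that the $n$th coordinate of $\mathbb{S}(z)\mathbb{F}$ is a polynomial of exact degree $n+1$ with only three nonzero coefficients (in degrees $n+1, n, n-1$). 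Right-multiplying $\mathbb{B}(z)$ by such a tridiagonal element via \eqref{def_mnozenia} and comparing with the left-hand side produces \eqref{rekurencja_na_wiel_ort_B}, with explicit coefficients depending on $\alpha_t,\beta_t,\gamma_t$ as in \eqref{oznaczenia}. The initial condition \eqref{initial_cond} reads $B_0(z)=1$, since $\mathbb{E}-\mathbb{F}\mathbb{D}=(1,0,0,\ldots)$ and right-multiplication by this idempotent extracts only the zeroth coordinate.

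Once the recurrence is in place, I would prove by induction on $n$ that $B_n(z)$ is a monic polynomial of exact degree $n$ in the generic variable whose coefficients are polynomials in $z$. Writing the recurrence schematically as $B_{n+1} = (y - c_n(z))B_n - e_n(z)B_{n-1}$, where $c_n(z)$ and $e_n(z)$ are affine in $z$ and independent of the generic variable $y$, the base cases $B_{-1}=0$, $B_0=1$, $B_1 = y - c_0(z)$ are immediate. For the step, $(y-c_n(z))B_n$ is monic of degree $n+1$ in $y$ with coefficients polynomial in $z$ (by the hypothesis on $B_n$), while $e_n(z)B_{n-1}$ has $y$-degree $n-1$ with coefficients polynomial in $z$; their difference inherits all three properties, closing the induction. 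Invertibility of $\mathbb{B}(z)$ in $\mathcal Q$ then follows from the fact that every coordinate polynomial has exact degree equal to its index, via \cite[Proposition 1.2]{bib_BrycWesolowski_2015}.

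The only genuinely nontrivial step is the first one: translating the compact identity \eqref{FB} into the explicit recurrence \eqref{rekurencja_na_wiel_ort_B}. This is the single place where the specific form of $\mathbb{S}(z)$ matters, and it requires careful bookkeeping using the identities in Lemma \ref{lemat_wlasnosci_elementow_algebry} together with \eqref{wlasnosci_Dq_1}--\eqref{wlasnosci_Dq_2} to verify both the tridiagonal structure of $\mathbb{S}(z)\mathbb{F}$ and the precise values of its three nonzero coefficients. I expect this algebraic verification, rather than any conceptual difficulty, to be the main obstacle; once it is done, the induction and the invertibility statement are routine. In particular, polynomial dependence of $B_n$ on $z$ comes for free, because the only $z$-dependent terms in the recurrence, namely $zq^{n+1}$ inside $c_n$ and $z\eta q^n$ inside $e_n$, are already linear in $z$.
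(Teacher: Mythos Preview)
Your proposal is correct and follows exactly the route sketched in the paper: the paper states just before the corollary that ``looking coordinate-wise at \eqref{FB} we get (after simplification) the three-term recurrence \eqref{rekurencja_na_wiel_ort_B}'' and then obtains the result ``inductively,'' which is precisely your plan of showing $\mathbb{S}(z)\mathbb{F}$ is tridiagonal, reading off the recurrence, and inducting on $n$. Your write-up is in fact more detailed than what the paper provides; the only caveat is that Lemma~\ref{lemat_wlasnosci_elementow_algebry} is not really needed for the tridiagonality check---a direct coordinate computation using $(\mathbb{D}_q)_n=[n]_q x^{n-1}$, $(\mathbb{F})_n=x^{n+1}$ and the multiplication rule \eqref{def_mnozenia} suffices.
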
 
\noindent Moreover,  \eqref{FB} and \eqref{initial_cond} are equivalent to
\begin{equation}\label{definicja_B}
\F\Bz\D+\E-\F\D=\Bz \Sz.
\end{equation}
Indeed, due to \eqref{wlasnosci_Dq_1} and the definition of $\Sz$, we have 
\begin{equation}\label{Sz(E-FD)}
\Sz(\E-\F\D)=\E-\F\D.
\end{equation}
If we multiply \eqref{FB} from the right by $\D$, we obtain \eqref{Sz(E-FD)} as follows
\begin{equation*}
    \begin{split}
        \F\Bz\D&=\Bz\Sz\F\D=\Bz\Sz-\Bz\Sz(\E-\F\D)\\
        &=\Bz\Sz-\Bz(\E-\F\D)=\Bz\Sz-(\E-\F\D).
    \end{split}
\end{equation*}
Conversely, in view of \eqref{DF=E} equality \eqref{definicja_B} multiplied from the right by $\F$ gives directly \eqref{FB}. Similarly, in view of \eqref{Sz(E-FD)}, multiplication of \eqref{definicja_B} from the right by $\E-\F\D$ gives \eqref{initial_cond}.

Now we are going to derive important relations between  $\Bz$ and $\widetilde{\H}$.
\begin{theorem}\label{glowne_twierdzenie}
	Assume that $\widetilde{\H}$ satisfies \eqref{HfalkaF} and $\widetilde{\H}(\E-\F\D)=\0$. Then for all $z\in\mathds{R}$ we have
	\begin{equation}\label{HB}
	\widetilde{\H}=\Bz\Dq\W_1^{-1}\Bz^{-1}.
	\end{equation}
	Moreover, for  $\widetilde{\M}:=\widetilde{\H}\F-\F\widetilde{\H}$ we get
	\begin{equation}\label{MB}
	\widetilde{\M}\Bz\W_2\W_3=\E-\F\D+(\F-z\E)\Bz(\D-\Q).
	\end{equation}
\end{theorem}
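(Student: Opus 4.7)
The plan is to introduce the candidate $\mathbf L(z) := \Bz\Dq\W_1^{-1}\Bz^{-1}$, verify that it satisfies the same defining relations \eqref{HfalkaF} and $\widetilde{\H}(\E-\F\D)=\0$ as $\widetilde{\H}$, and conclude via uniqueness. Uniqueness of $\widetilde{\H}$ follows from uniqueness of $\H$ in \eqref{generator1} (Proposition 2.4 of \cite{bib_BrycWesolowski_2015}, available here since $\sigma\tau=0\ne 1$) together with the observation that $\widetilde{\H}\mapsto(\E+\eta\F)\widetilde{\H}$ is injective coordinate-wise (because $1+\eta x\not\equiv 0$). Once \eqref{HB} is in place, \eqref{MB} will follow by a direct algebraic computation.

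First I would dispatch the initial condition for $\mathbf L(z)$. The identity \eqref{initial_cond} and invertibility of $\Bz$ give $\Bz^{-1}(\E-\F\D)=\E-\F\D$. From \eqref{wlasnosci_Dq_1} we have $\Dq(\E-\F\D)=\0$, so $\W_1(\E-\F\D)=(\E+\beta\Dq\F\Dq)(\E-\F\D)=\E-\F\D$ and hence $\W_1^{-1}(\E-\F\D)=\E-\F\D$ as well. Chaining these gives $\mathbf L(z)(\E-\F\D)=\Bz\Dq(\E-\F\D)=\0$.

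The main obstacle is verifying that $\mathbf L(z)$ satisfies \eqref{HfalkaF}. Using \eqref{FB} in the form $\Bz^{-1}\F\Bz=\Sz\F$, conjugation by $\Bz$ reduces the equation for $\mathbf L(z)$ to the purely algebraic identity in $\mathcal Q$
\begin{equation*}
\Dq\W_1^{-1}\Sz\F=q\Sz\F\Dq\W_1^{-1}+\E+\gamma\Dq\W_1^{-1}+\alpha\bigl(\Dq\W_1^{-1}\bigr)^2+\beta\Dq\W_1^{-1}\Sz\F\Dq\W_1^{-1}.
\end{equation*}
I would establish this by multiplying both sides on the right by $\W_2\W_3$ and invoking Corollary \ref{wniosek_o_Sz} (which computes $\Dq\W_1^{-1}\Sz\F\W_2\W_3=\R+q\Sz\F\Dq\W_2$) together with the consequence $\Dq\W_1^{-1}\W_2\W_3=\Dq(\W_2-\beta\Dq)=q\Dq\W_2+\Q$ of (vi) of Lemma \ref{lemat_wlasnosci_elementow_algebry} and \eqref{def Q}; the commutations (i)--(ii) of that lemma, $\W_1\Dq=\Dq\W_2$ and $\W_4\Dq=\Dq\W_1$, serve to move $\Dq$ through the remaining products. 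Matching the $\alpha$-quadratic term $\alpha(\Dq\W_1^{-1})^2$ and the $\beta$-cross term against the contribution from $\R=(\W_1+\gamma\Dq)\W_2+\alpha\Dq^2$ and the $z$-part $z(\D-\Q)$ of $\Sz$ will be the delicate bookkeeping.

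Finally, \eqref{MB} follows from \eqref{HB} by direct computation. Using \eqref{FB} and \eqref{HB},
\begin{equation*}
\widetilde{\M}\Bz=\widetilde{\H}(\F\Bz)-\F(\widetilde{\H}\Bz)=\widetilde{\H}\Bz\Sz\F-\F\Bz\Dq\W_1^{-1}=\Bz\bigl(\Dq\W_1^{-1}\Sz\F-\Sz\F\Dq\W_1^{-1}\bigr).
\end{equation*}
Multiplying on the right by $\W_2\W_3$, Corollary \ref{wniosek_o_Sz} handles the first inner term while the identity $\Dq\W_1^{-1}\W_2\W_3=q\Dq\W_2+\Q$ recalled above handles the second; the $q\Sz\F\Dq\W_2$ pieces cancel, leaving $\widetilde{\M}\Bz\W_2\W_3=\Bz(\R-\Sz\F\Q)=\Bz\R-\F\Bz\Q$ (using \eqref{FB} once more). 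Writing $\R=\Sz-z(\D-\Q)$ from \eqref{def S} and then substituting $\Bz\Sz=\F\Bz\D+\E-\F\D$ from \eqref{definicja_B} rearranges the right-hand side to $\E-\F\D+(\F-z\E)\Bz(\D-\Q)$, which is \eqref{MB}.
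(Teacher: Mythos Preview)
Your proposal is correct and follows essentially the same route as the paper: introduce the candidate $\Bz\Dq\W_1^{-1}\Bz^{-1}$, check the initial condition, reduce \eqref{HfalkaF} via $\Bz^{-1}\F\Bz=\Sz\F$ to an identity in $\Dq,\F,\W_i,\Sz$, clear it by right-multiplication and Corollary~\ref{wniosek_o_Sz}, and then derive \eqref{MB} by the same computation you give. The one noteworthy difference is your choice of right-multiplier in Part~1: the paper multiplies the conjugated identity by $\W_1\W_2$, which immediately collapses the terms $\E+\gamma\Dq\W_1^{-1}+\alpha(\Dq\W_1^{-1})^2$ to $\R$ (using $\Dq\W_1^{-1}\cdot\W_1\W_2=\Dq\W_2$ and (i) to get $(\Dq\W_1^{-1})^2\W_1\W_2=\Dq^2$), and then groups $\W_1\W_2-\beta\Dq\W_2=(\W_1-\beta\Dq)\W_2=\W_3\W_2$ so that Corollary~\ref{wniosek_o_Sz} applies directly to $\Dq\W_1^{-1}\Sz\F\W_2\W_3$; the cancellation $\R+q\Sz\F\Dq\W_2-q\Sz\F\Dq\W_2-\R=\0$ is then immediate. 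Your choice of $\W_2\W_3$ also works (it is invertible, and your identity $\Dq\W_1^{-1}\W_2\W_3=q\Dq\W_2+\Q$ is correct), but the bookkeeping for the $\alpha$- and $\beta$-terms is genuinely heavier, as you anticipate; switching to $\W_1\W_2$ removes that difficulty entirely.
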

\begin{proof}  In the proof we fix arbitrary $z\in\mathds{R}$ and  we suppress $(z)$ in $\Bz$, $\X(z)$, and $\Sz$ to make the expression easier to follow.
  \begin{enumerate} 
  \item Proof of \eqref{HB}. 
  Since $\W_1$ (see Remark \ref{uwaga_o_odwrotnosciach}) and $\B$ (see Corollary \ref{wniosek_o_funkcji_B}) are invertible,
    \begin{equation}\label{X}
    \X:=\B\Dq\W_1^{-1}\B^{-1}
    \end{equation}
    is a well-defined element of $\mathcal{Q}$. Due to \eqref{initial_cond} we have $\B^{-1}(\E-\F\D)=\E-\F\D$ and according to identity \eqref{wlasnosci_Dq_1} we get $\W_1(\E-\F\D)=\E-\F\D$, thus we finally obtain that
    $$\X(\E-\F\D)=\B\Dq\W_1^{-1}\B^{-1}(\E-\F\D)=\B\Dq\W_1^{-1}(\E-\F\D)=\B\Dq(\E-\F\D)=\0.$$
   Since equation \eqref{HfalkaF}  with the initial condition uniquely determines $\widetilde\H$ it suffices to show that $\X$ defined by \eqref{X}  satisfies 
    \begin{equation}\label{RHS}
    LHS:=\X\F-q\F\X-\E-\gamma\X-\alpha\X^2-\beta\X\F\X=\0,
    \end{equation}
    because we have already checked that $\X(\E-\F\D)=\0$.
    Since \eqref{FB} implies $\B^{-1}\F=\Sb\F\B^{-1}$, after simplification $LHS$ above assumes the form
    \begin{align*}
    LHS=&\B\Dq\W_1^{-1}\Sb\F\B^{-1}-q\B\Sb\F\Dq\W_1^{-1}\B^{-1}-\E-\gamma\B\Dq\W_1^{-1}\B^{-1}\\
    &-\alpha\B(\Dq\W_1^{-1})^2\B^{-1}-\beta\B\Dq\W_1^{-1}\Sb\F\Dq\W_1^{-1}\B^{-1}\\
    =&\B\W\B^{-1},
    \end{align*}
    where 
    $$\W:=\Dq\W_1^{-1}\Sb\F-q\Sb\F\Dq\W_1^{-1}-\E-\gamma\Dq\W_1^{-1}-\alpha(\Dq\W_1^{-1})^2-\beta\Dq\W_1^{-1}\Sb\F\Dq\W_1^{-1}.$$
    Consequently, applying (i) of Lemma \ref{lemat_wlasnosci_elementow_algebry}, we obtain
   \begin{equation*}
   \begin{split}
      \W\W_1\W_2&=\Dq\W_1^{-1}\Sb\F\W_1\W_2-q\Sb\F\Dq\W_2-\W_1\W_2-\gamma\Dq\W_2-\alpha\Dq^2-\beta\Dq\W_1^{-1}\Sb\F\Dq\W_2\\
      &=\Dq\W_1^{-1}\Sb\F(\W_1-\beta\Dq)\W_2-q\Sb\F\Dq\W_2-\R,
    \end{split}
   \end{equation*}
    where the last equation holds due to \eqref{def R}. From (iv) of Lemma \ref{lemat_wlasnosci_elementow_algebry} and Corollary \ref{wniosek_o_Sz} we have
    \begin{equation*}
   \begin{split}
      \W\W_1\W_2&=\Dq\W_1^{-1}\Sb\F\W_3\W_2-q\Sb\F\Dq\W_2-\R=\0.
    \end{split}
   \end{equation*}
   Because $\W_1$ and $\W_2$ are invertible, then $\W=\0$ and consequently \eqref{RHS} holds true.
   \item Proof of \eqref{MB}. 
   Now we consider $\widetilde{\M}$. Using \eqref{FB} and \eqref{HB} we get
   $$\widetilde{\M}=\widetilde{\H}\F-\F\widetilde{\H}=\B\Dq\W_1^{-1}\B^{-1}\F-\F\B\Dq\W_1^{-1}\B^{-1}=\B(\Dq\W_1^{-1}\Sb\F-\Sb\F\Dq\E_{1}^{-1})\B^{-1}.$$
   Therefore by (vi) of Lemma \ref{lemat_wlasnosci_elementow_algebry} and Corollary \ref{wniosek_o_Sz} we get
    \begin{equation*}
    \begin{split}
        \widetilde{\M}\B\W_2\W_3&=\B(\Dq\W_1^{-1}\Sb\F\W_2\W_3-\Sb\F\Dq\W_1^{-1}\W_2\W_3)\\
        &=\B(\R+q\Sb\F\Dq\W_2-\Sb\F\Dq(\W_2-\beta\Dq)).
    \end{split}
    \end{equation*}
    Using the fact that $\R=\Sb-z(\D-\Q)$ and recalling \eqref{def Q}, we can rewrite the above as
    $$\widetilde{\M}\B\W_2\W_3=\B(\R-\Sb\F((1-q)\Dq\W_2-\beta\Dq^2))=\B\Sb-z\B(\D-\Q)-\B\Sb\F\Q.$$ 
    According to \eqref{definicja_B} and \eqref{FB}  we finally obtain 
    $$\widetilde{\M}\B\W_2\W_3=\F\B\D+\E-\F\D-z\B(\D-\Q)-\F\B\Q=\E-\F\D+(\F-z\E)\B(\D-\Q).$$ 
    \end{enumerate}
    \end{proof}
    The following result, a consequence of Theorem \ref{glowne_twierdzenie}, will be crucial in deriving the explicit form of $\mathbf A_t$ in the next section.
    \begin{corollary}\label{Bzx}
    For $\widetilde{\M}$ and $\Bz$ defined above
    \begin{equation}\label{MB_w_x}
    \widetilde{\M}=\left((\E-\F\D)\Bz^{-1}\right)|_{z:=x}.
    \end{equation}
    \end{corollary}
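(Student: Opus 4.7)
The plan is to reorganize identity \eqref{MB} so that the only $z$-dependent obstruction to reading off $\widetilde{\M}$ sits inside an explicit factor of $\F - z\E$, which self-annihilates upon substituting $z := x$. The key algebraic step is the simplification $(\E - \F\D)\W_2\W_3 = \E - \F\D$. This holds because $\E - \F\D = (1,0,0,\ldots)$, so left-multiplication by $\E - \F\D$ extracts the constant-in-$x$ term of each coordinate polynomial; since $\W_i - \E$ for $i \in \{2,3\}$ is a scalar multiple of $\F\Dq^2$, whose $k$-th coordinate is $[k]_q[k-1]_q\,x^{k-1}$ (vanishing for $k \le 1$, free of constant term for $k \ge 2$), the correction contributes nothing. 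Using invertibility of $\W_2\W_3$ from Remark \ref{uwaga_o_odwrotnosciach}, this rewrites as $(\E-\F\D)(\W_2\W_3)^{-1} = \E - \F\D$.

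Invertibility of $\Bz$ from Corollary \ref{wniosek_o_funkcji_B} then lets me right-multiply \eqref{MB} by $(\W_2\W_3)^{-1}\Bz^{-1}$ to obtain, for every $z \in \mathds{R}$,
\begin{equation*}
\widetilde{\M} \;=\; (\E - \F\D)\Bz^{-1} \;+\; (\F - z\E)\,\Bz\,(\D - \Q)\,(\W_2\W_3)^{-1}\Bz^{-1}.
\end{equation*}
Next I would substitute $z := x$ on both sides. The LHS is unchanged since $\widetilde{\M}$ is independent of $z$. For the residual term on the RHS, I would apply Remark \ref{uwaga_o_funkcjach} with left factor $\X(z) := \F - z\E$, whose $n$-th coordinate $x^{n+1} - z x^n$ has coefficients polynomial in $z$. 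Since $(\F - z\E)|_{z:=x} = \0$ coordinatewise (each coordinate becomes $x^{n+1} - x\cdot x^n = 0$), the remark forces the residual term to vanish entirely, and the identity $\widetilde{\M} = \left((\E - \F\D)\Bz^{-1}\right)|_{z:=x}$ follows.

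The main (mild) obstacle is the proper invocation of Remark \ref{uwaga_o_funkcjach}: one cannot naively distribute $|_{z:=x}$ across a product in $\mathcal Q$, because the generic variable $x$ of the right factor gets entangled with the $z$-dependent coefficients of the left factor. The remark authorizes substituting in the left factor first, which is precisely the manoeuvre that collapses the $(\F - z\E)$ prefactor to $\0$ and closes the argument.
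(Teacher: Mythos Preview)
Your proof is correct and follows essentially the same route as the paper: establish $(\E-\F\D)\W_2\W_3=\E-\F\D$, right-multiply \eqref{MB} by $(\W_2\W_3)^{-1}\Bz^{-1}$, and then use Remark~\ref{uwaga_o_funkcjach} together with $(\F-z\E)|_{z:=x}=\0$ to eliminate the residual term. The only cosmetic differences are that the paper derives $(\E-\F\D)\W_2\W_3=\E-\F\D$ in one line from $(\E-\F\D)\F=\0$ (a consequence of $\D\F=\E$) rather than via your constant-term argument, and that the paper keeps the factorization $\X(z)\Y(z)$ with $\Y(z)=\Bz^{-1}$ and applies Remark~\ref{uwaga_o_funkcjach} twice (once inside $\X(z)$, once at the outer level), whereas you distribute first and apply the remark once; both arrive at the same place.
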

    \begin{proof}
    Directly from \eqref{DF=E} we deduce that $$(\E-\F\D)\W_2\W_3=\E-\F\D.$$
    Then \eqref{MB} multiplied from the right by $\W_3^{-1}\W_2^{-1}\Bz^{-1}$ ($\Bz$ is invertible from Corollary \ref{wniosek_o_funkcji_B}) yields
    $$\widetilde{\M}=\left(\E-\F\D+(\F-z\E)\Bz(\D-\Q)\W_3^{-1}\W_2^{-1}\right)\Bz^{-1}=\X(z)\Y(z)$$
    where $\X(z)=\E-\F\D+(\F-z\E)\Bz(\D-\Q)\W_3^{-1}\W_2^{-1}$ and $\Y(z)=\Bz^{-1}$. Since $\widetilde{\M}$ does not depend on $z$, we can write
    \begin{equation}\label{MXY}
    \widetilde{\M}=(\X(z)\Y(z))|_{z:=x}.
    \end{equation} 
    Note that  
    $$\X(z)|_{z:=x}=\E-\F\D+\left(\widetilde{\X}(z)\widetilde{\Y}(z)\right)|_{z:=x},$$
    where $\widetilde{\X}(z)=\F-z\E$ and $\widetilde{\Y}(z)=\Bz(\D-\Q)\W_3^{-1}\W_2^{-1}$. Since $\widetilde{\X}(z)|_{z:=x}=(\F-z\E)|_{z:=x}=\F-\F=0$, in view of Corollary \ref{wniosek_o_funkcji_B} applied to $\widetilde{\X}(z)$ and $\widetilde{\Y}(z)$, we conclude that $\X(z)|_{z:=x}=\E-\F\D$. Applying Corollary \ref{wniosek_o_funkcji_B} again, this time to $\X(z)$ and $\Y(z)$, the right-hand side of \eqref{MXY} simplifies to \eqref{MB_w_x}. 
    \end{proof}
    Since $\widetilde{\H}\F\D=\widetilde{\H}$, it follows from the definition of $\widetilde{\M}$ that $\widetilde{\H}=\widetilde{\M}\D+\F\widetilde{\H}\D$. Iterating this equality we get
    \begin{equation}\label{Hser}
    \widetilde{\H}=\sum_{k\ge 0}\,\F^k\widetilde{\M}\D^{k+1},
    \end{equation}
    where, coordinate-wise, all sums have a finite number of non-zero summands.\\
    Note that as intended, \eqref{Hser} is the solution $\widetilde{\H}$ of the equation \eqref{tildeH} given in terms of $\Bz$. Consequently, we have also found the solution to the initial $q$-commutation equation \eqref{generator1}. 
\section{Infinitesimal generator of the quadratic harness $QH(\eta,\theta;0,\tau;q)$}\label{dowod_generator}
Equipped with the results obtained in the previous section, we are ready to prove our main result which gives an explicit integral representation of the infinitesimal generator of $QH(\eta,\theta;0,\tau;q)$.

\begin{proof}[Proof of Theorem \ref{twierdzenie_o_generatorze_wiel}]Fix $t\geq0$ and $z\in\mathds{R}$. By Favard's theorem, see \cite[Theorem 4.4.]{Chihara}, polynomials $\{B_n(x;z,t):n\geq0\}$ are "orthogonal" with respect to a unique moment functional $\mathcal{L}_{z,t,\theta,\tau,\eta,q}$, which acts on polynomials in variable $y\in\mathds{R}$, i.e.,
$$\mathcal{L}_{z,t,\theta,\tau,\eta,q}(B_n(y;z,t)B_k(y;z,t))=\kappa_n\mathds{1}(n=k),$$
where $\kappa_0\ne0$. Without any loss of generality, we assume that $\mathcal{L}_{z,t,\theta,\tau,\eta,q}$ is normalized, i.e.,  $\kappa_0=1$. Then for $\E_y\in\mathcal Q$, $y\in\mathds{R}$, see \eqref{Ea}, we obtain
\begin{equation}\label{intB}
\mathcal{L}_{z,t,\theta,\tau,\eta,q}(\E_y\Bz)=\E-\F\D,
\end{equation}
where $\mathcal{L}_{z,t,\theta,\tau,\eta,q}$ on the left hand side of \eqref{intB} acts coordinate-wise on $\E_y\Bz$.
    Moreover, let us consider $\Z(z)$ given by
    \begin{equation*}
    \Z(z):=\bigg(\mathcal{L}_{z,t,\theta,\tau,\eta,q}(1),\mathcal{L}_{z,t,\theta,\tau,\eta,q}(y),\mathcal{L}_{z,t,\theta,\tau,\eta,q}(y^2),\ldots\bigg)=\mathcal{L}_{z,t,\theta,\tau,\eta,q}(\E_y).
    \end{equation*}
    Note that $\Z(z)$ is a well-defined element of the algebra $\mathcal{Q}$ with all coordinates being polynomials of degree zero (in $x$). By the linearity of the moment functional, we obtain, in view of \eqref{intB}, that
    \begin{equation*}
    \Z(z)\Bz=\mathcal{L}_{z,t,\theta,\tau,\eta,q}(\E_y\Bz) =\E-\F\D.
    \end{equation*}
    Hence,
    $$\Z(z)=(\E-\F\D)\Bz^{-1}.$$
    Since the above equality holds for all fixed $z\in\mathds{R}$, a comparison with \eqref{MB_w_x} gives that $\Z(z)|_{z=x}\in\mathcal{Q}$ and
    \begin{equation}\label{M=Z}
    \widetilde\M=\Z(z)|_{z=x}=\mathcal{L}_{z,t,\theta,\tau,\eta,q}(\E_y)|_{z=x}=\mathcal{L}_{x,t,\theta,\tau,\eta,q}(\E_y)
    \end{equation}
     Thus, inserting \eqref{M=Z} into \eqref{Hser}, linearity of $\mathcal{L}_{x,t,\theta,\tau,\eta,q}$ implies
    \begin{equation}\label{wtildeH}
    \widetilde{\H}=\mathcal{L}_{x,t,\theta,\tau,\eta,q}(\Q_y),
    \end{equation}
    where 
    $$\Q_y:=\sum\limits_{k=0}^\infty\,\F^k\E_y\D^{k+1}.$$
	Recall that, see \cite[(3.8)]{bib_BrycWesolowski_2015},
	\begin{equation}\label{At}
	\A=\sum_{j=0}^{\infty}\,\F^j\H\D^{j+1}.
	\end{equation}
    Since $\H=(\E+\eta\F)\widetilde{\H}$, plugging this together with \eqref{wtildeH} into \eqref{At} we obtain
	$$
	\A=\sum_{j=0}^{\infty}\,\F^j(\E+\eta\F)\widetilde{\H}\D^{j+1}=(\E+\eta\F)\mathcal{L}_{x,t,\theta,\tau,\eta,q}\left(\sum_{j=0}^{\infty}\F^j\Q_y\D^{j+1}\right).
	$$
	Since $\D\E_y=\mathbb 0$, then, according to \eqref{D1}, we have 
	\begin{align*}\D_1\Q_y=&\left(\sum_{j=0}^{\infty}\,\F^j\D^{j+1}\right)\left(\sum_{k=0}^{\infty}\,\F^k\E_y\D^{k+1}\right)=\sum_{j=0}^{\infty}\,\F^j\left(\sum_{k\ge j+1}\,\F^{k-j-1}\E_y\D^{k+1}\right)\\
	=&\sum_{j=0}^{\infty}\F^j\left(\sum_{k\ge j+1}\,\F^{k-j-1}\E_y\D^{k-j}\right)\D^{j+1}=\sum_{j=0}^{\infty}\F^j\Q_y\D^{j+1}.
	\end{align*}
	Thus,
	$$\A=(\E+\eta\F)\mathcal{L}_{x,t,\theta,\tau,\eta,q}(\D_1\Q_y).$$
	Since $\E_y\D^{k+1}$ has the $n$th coordinate equal to zero for $n\le k$ and equal to $y^{n-k-1}$ for $n\ge k+1$, the $n$th coordinate of $\Q_y$, has the form  
    \begin{equation*}
    y^{n-1}+y^{n-2}x+\ldots+yx^{n-2}+x^{n-1}=\tfrac{y^n-x^n}{y-x}.
    \end{equation*}
	Therefore, the $n$th coordinate of $\D_1\Q_y$ is $\tfrac{\partial}{\partial x}\,\tfrac{y^n-x^n}{y-x}$. Consequently, $A_n(x)$, the $n$th coordinate of $\A$, assumes the form 
	\begin{equation*}
	A_n(x)=(1+\eta x)\mathcal{L}_{x,t,\theta,\tau,\eta,q}\left(\tfrac{\partial}{\partial x}\,\tfrac{y^n-x^n}{y-x}\right).
	\end{equation*}
	Recall that $A_n(x)=\mathbf A_t(x^n)$. Due to the linearity, we get for any polynomial $f$
	\begin{equation}\label{A_jako_operator}
	\mathbf A_tf(x)=(1+\eta x)\mathcal{L}_{x,t,\theta,\tau,\eta,q}\left(\tfrac{\partial}{\partial x}\,\tfrac{f(y)-f(x)}{y-x}\right).
    \end{equation}
    
Now are going to show that the moment functional $\mathcal{L}_{x,t,\theta,\tau,\eta,q}$  is non-negative. Consider $\mathbb G:=(G_0,G_1,G_2,\ldots)\in\mathcal Q$ with coordinates of the form
$$
G_k(x):=\lim\limits_{h\to0^+}\int\limits_{\mathds{R}}y^k\tfrac{(y-x)^2}{h}\mathds{P}_{t,t+h}(x,\mathrm{d}y),\quad k=0,1,\ldots
$$
For any $k\in\mathds{N}\cup\{0\}$
\small
\begin{equation*}
    \begin{split}
        \int\limits_{\mathds{R}}y^k\tfrac{(y-x)^2}{h}\mathds{P}_{t,t+h}(x,\mathrm{d}y)&=\int\limits_{\mathds{R}}\tfrac{y^{k+2}-x^{k+2}}{h}\mathds{P}_{t,t+h}(x,\mathrm{d}y)-2x\int\limits_{\mathds{R}}\tfrac{y^{k+1}-x^{k+1}}{h}\mathds{P}_{t,t+h}(x,\mathrm{d}y)\\
        &+x^2\int\limits_{\mathds{R}}\tfrac{y^{k}-x^{k}}{h}\mathds{P}_{t,t+h}(x,\mathrm{d}y),
    \end{split}
\end{equation*}
\normalsize
whence 
$$
G_k(x)=\mathbf A_t(x^{k+2})-2x\mathbf A_t(x^{k+1})+x^2\mathbf A_t(x^k),\quad k=0,1,\ldots
$$
In view of the fact that $\H_t=\A_t\F-\F\A_t$, the above equality implies 
$$
\mathbb G=\A_t\F^2-2\F\A_t\F+\F^2\A_t=\H_t\F-\F\H_t.
$$
Consequently, since $\H=(\E+\eta\F)\widetilde \H$ and $\widetilde \H\F-\F\widetilde \H=\widetilde \M$, see \eqref{M=Z}, we obtain
$$
\mathbb G=(\E+\eta\F)(\widetilde \H\F-\F\widetilde \H)=(\E+\eta\F)\mathcal{L}_{x,t,\theta,\tau,\eta,q}(\E_y).
$$
Looking coordinate-wise at the above identity, we get that for all $k\geq0$
\begin{equation}\label{y^k}
\lim\limits_{h\to0^+}\int\limits_{\mathds{R}}y^k\tfrac{(y-x)^2}{h}\mathds{P}_{t,t+h}(x,\mathrm{d}y)=(1+\eta x)\mathcal{L}_{x,t,\theta,\tau,\eta,q}(y^k)
\end{equation}
and as a result for any polynomial $f\geq0$ (i.e., $f(y)\geq0$ for all $y\in\mathds{R}$) we have
\begin{equation}\label{nieujemnosc}
0\leq\lim\limits_{h\to0^+}\int\limits_{\mathds{R}}f(y)\tfrac{(y-x)^2}{h}\mathds{P}_{t,t+h}(x,\mathrm{d}y)=(1+\eta x)\mathcal{L}_{x,t,\theta,\tau,\eta,q}(f(y))
\end{equation}
From \eqref{martyngal} and \eqref{drugi_moment} the conditional variance for quadratic harnesses is given by
\begin{equation}\label{condvar}
    \var(X_t|\mathcal{F}_s)=(t-s)(1+\eta X_s).
\end{equation}
When $x\in\text{supp}(X_s)$ is such that $1+\eta x=0$, then on the set $\{X_s=x\}$ we have $\var(X_t|\mathcal{F}_s)=0$. Consequently, $x$ is an absorbing state, and the infinitesimal generator is zero according to \eqref{gen_1_wstep}. Consequently, Theorem \ref{twierdzenie_o_generatorze_wiel} holds in this case.\\
Otherwise, if $x\in\text{supp}(X_s)$ then \eqref{condvar} yields $1+\eta x>0$ since the conditional variance is non-negative. Consequently, \eqref{nieujemnosc} implies that $\mathcal{L}_{x,t,\theta,\tau,\eta,q}$ is a nonnegative-definite moment functional, i.e., for all polynomials $f\geq0$ we have 
$\mathcal{L}_{x,t,\theta,\tau,\eta,q}f\geq 0$. The proof of \cite[Theorem 4.4.]{Chihara} implies that the product of consecutive coefficients of $B_{n-1}$ from recurrence \eqref{rekurencja_na_wiel_ort_B} is nonnegative. Therefore by \cite[Theorem A.1.]{bib_BrycWesolowski_2010} there exists a probability measure $\nu_{x,t,\eta,\theta,\tau,q}$ such that for all polynomials $f$ we have
\begin{equation}\label{L_a_mu}
    \mathcal{L}_{x,t,\theta,\tau,\eta,q}\,f=\int\limits_{\mathds{R}}f(y)\;\nu_{x,t,\eta,\theta,\tau,q}(\mathrm{d}y).
\end{equation}
 Putting together \eqref{A_jako_operator} and \eqref{L_a_mu} ends the proof.
\end{proof}

 The measure $\nu_{x,t,\eta,\theta,\tau,q}$ that appears on the right-hand side of \eqref{generator} may not be unique. A problem of uniqueness of the orthogonal measure is equivalent to a Hamburger moment problem with moments respectively equal $\mathcal{L}_{x,t,\theta,\tau,\eta,q}(y^n)$, $n\geq0$, see \cite[p.71]{Chihara}.\\
In addition to the integral representation for the infinitesimal generator, we have shown in the proof above, compare \eqref{y^k} and \eqref{L_a_mu}, that all moments of the measure $\tfrac{(y-x)^2}{h}\mathds{P}_{t,t+h}(x,\mathrm{d}y)$  converge to the respective moments of $(1+\eta x)\nu_{x,t,\eta,\theta,\tau,q}(\mathrm{d}y)$ as $h\to 0^+$. Analogously, we can show the same for  $\tfrac{(y-x)^2}{h}\mathds{P}_{t-h,t}(x,\mathrm{d}y)$. Moreover, we can normalize measures $\tfrac{(y-x)^2}{h}\mathds{P}_{t,t+h}(x,\mathrm{d}y)$ and $\tfrac{(y-x)^2}{h}\mathds{P}_{t-h,t}(x,\mathrm{d}y)$ in order to make them probabilistic.
\begin{remark}\label{miary_probabilistyczne}
	Let $1+\eta x>0$ and  $h>0$. Measures 
 $$\tfrac{(y-x)^2}{h(1+\eta x)}\mathds{P}_{t,t+h}(x,\mathrm{d}y) \qquad \text{and}\qquad \tfrac{(y-x)^2}{h(1+\eta x)}\mathds{P}_{t-h,t}(x,\mathrm{d}y)$$
 are probabilistic.
\end{remark}
\begin{proof}
	Note that, except for the trivial case in \eqref{condvar}, we have that $1+\eta x>0$ for all $x\in\text{supp}(X_t)$.  Then
	$\tfrac{(y-x)^2}{h(1+\eta x)}\mathds{P}_{t,t+h}(x,\mathrm{d}y)$ is non-negative and
	$$\int\limits_\mathds{R}\tfrac{(y-x)^2}{h(1+\eta x)}\mathds{P}_{t,t+h}(x,\mathrm{d}y)=\tfrac{\var(X_{t+h}|X_t=x)}{h(1+\eta x)}=1.$$
	The same arguments can be applied to $\tfrac{(y-x)^2}{h(1+\eta x)}\mathds{P}_{t-h,t}(x,\mathrm{d}y)$.
\end{proof}

From now on, we restrict the domain of $q$ to $q\in[-1,1)$. Then the coefficients in the recurrence \eqref{rekurencja_na_wiel_ort_B} are bounded for fixed $t$ and $x$. Consequently, the polynomials $(B_n(\cdot;x,t))_{n\geq0}$ are orthogonal with respect to the measure with bounded support, see Theorems 2.5.4 and 2.5.5 in \cite{Ismail}. As a result, $\nu_{x,t,\eta,\theta,\tau,q}$ is uniquely determined by its moments, so the convergence of the moments of the measures $\tfrac{(y-x)^2}{h(1+\eta x)}\mathds{P}_{t,t+h}(x,\mathrm{d}y)$ and $\tfrac{(y-x)^2}{h(1+\eta x)}\mathds{P}_{t-h,t}(x,\mathrm{d}y)$ to the moments of $\nu_{x,t,\eta,\theta,\tau,q}$ implies the weak convergence of these measures, see \cite[Theorem 30.2]{billingsley}. As a result, when $q\in[-1,1)$, we can extend the domain of the infinitesimal generator $\mathbf{A}_t$ to bounded continuous functions with bounded continuous second derivative.
\begin{corollary}\label{twierdzenie_o_generatorze_ogolne}
Let $g:\mathds{R}\to\mathds{R}$ be a bounded continuous function with a bounded continuous second derivative. Then for $x\in\mathrm{supp}(X_t)$ we have
    \begin{equation}\label{At+ dla funkcji g}
    \begin{split}
        \mathbf{A}_tg(x)=\tfrac{1+\eta x}{2}g''(&x)\nu_{x,t,\eta,\theta,\tau,q}(\{x\})\\
        &+(1+\eta x)\int\limits_{\mathds{R}\setminus\{x\}}\frac{\partial}{\partial x} \bigg(\frac{g(y)-g(x)}{y-x}\bigg)\nu_{x,t,\eta,\theta,\tau,q}(\mathrm{d}y),
        \end{split}
    \end{equation}
    where $\nu_{x,t,\eta,\theta,\tau,q}$ is the probability measure defined in Theorem \ref{twierdzenie_o_generatorze_wiel}.
\end{corollary}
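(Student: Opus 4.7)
The plan is to start from the pointwise definition $\mathbf A_t g(x)=\lim_{h\to 0^+}\int_{\mathds R}\tfrac{g(y)-g(x)}{h}\,\mathds P_{t,t+h}(x,\mathrm{d}y)$ and exploit the regularity of $g$ through the second-order Taylor decomposition
\begin{equation*}
g(y)-g(x)=g'(x)(y-x)+\psi(x,y)(y-x)^2,\qquad \psi(x,y):=\int_0^1(1-s)\,g''(x+s(y-x))\,\mathrm{d}s.
\end{equation*}
Using that $g''$ is bounded and continuous, I would verify that $y\mapsto\psi(x,y)$ is continuous on all of $\mathds R$, bounded by $\tfrac12\|g''\|_\infty$, takes the value $\psi(x,x)=\tfrac12 g''(x)$ on the diagonal, and coincides with $\tfrac{\partial}{\partial x}\tfrac{g(y)-g(x)}{y-x}$ for $y\ne x$.

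Then I would split the integrand along this decomposition. The linear term contributes $\tfrac{g'(x)}{h}\int(y-x)\,\mathds P_{t,t+h}(x,\mathrm dy)$, which vanishes \emph{identically} in $h$ by the martingale property \eqref{martyngal}, since $\mathds E(X_{t+h}\mid X_t=x)=x$. The quadratic part rewrites as
\begin{equation*}
\int_{\mathds R}\psi(x,y)\,\tfrac{(y-x)^2}{h}\,\mathds P_{t,t+h}(x,\mathrm dy)\;=\;(1+\eta x)\int_{\mathds R}\psi(x,y)\,\mu_h(\mathrm dy),
\end{equation*}
where $\mu_h$ is the probability measure $\tfrac{(y-x)^2}{h(1+\eta x)}\mathds P_{t,t+h}(x,\mathrm dy)$ from Remark \ref{miary_probabilistyczne}; the alternative case $1+\eta x=0$ is handled exactly as in the proof of Theorem \ref{twierdzenie_o_generatorze_wiel}, where $x$ is an absorbing state and both sides of \eqref{At+ dla funkcji g} vanish.

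The crucial input — already prepared in the paragraph preceding the statement — is that, for $q\in[-1,1)$, the coefficients in the recurrence \eqref{rekurencja_na_wiel_ort_B} are bounded, so $\nu_{x,t,\eta,\theta,\tau,q}$ has compact support and is uniquely determined by its moments. Combined with the moment convergence $\int y^k\mu_h(\mathrm dy)\to\int y^k\nu_{x,t,\eta,\theta,\tau,q}(\mathrm dy)$ established in the proof of Theorem \ref{twierdzenie_o_generatorze_wiel}, this yields weak convergence $\mu_h\Rightarrow\nu_{x,t,\eta,\theta,\tau,q}$. Since $\psi(x,\cdot)$ is bounded and continuous on $\mathds R$, weak convergence gives $\int\psi(x,y)\,\mu_h(\mathrm dy)\to\int\psi(x,y)\,\nu_{x,t,\eta,\theta,\tau,q}(\mathrm dy)$, and separating the atomic contribution at $y=x$, with value $\tfrac12 g''(x)$, from the rest of the integral, where $\psi(x,y)=\tfrac{\partial}{\partial x}\tfrac{g(y)-g(x)}{y-x}$, produces exactly \eqref{At+ dla funkcji g}.

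The one step that demands actual care — and that I expect to be the main technical point — is the verification that $\psi(x,\cdot)$ is genuinely continuous at the diagonal $y=x$ and bounded on $\mathds R$, so that the weak convergence of $\mu_h$ delivers the correct value of the integrand, including the atomic contribution. Once Taylor's theorem is written in the integral form above, both properties are straightforward consequences of dominated convergence under the assumed boundedness and continuity of $g''$. Everything else is bookkeeping built on objects already constructed in the proof of Theorem \ref{twierdzenie_o_generatorze_wiel}.
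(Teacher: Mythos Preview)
Your proposal is correct and follows essentially the same route as the paper: a second-order Taylor expansion splits off the linear term (which vanishes by the martingale property) and leaves an integral of a bounded continuous function against $\tfrac{(y-x)^2}{h}\mathds P_{t,t+h}(x,\mathrm dy)$, to which the weak convergence toward $(1+\eta x)\nu_{x,t,\eta,\theta,\tau,q}$ is applied. Your $\psi(x,\cdot)$ is exactly the paper's $\phi_x$ after the substitution $z=x+s(y-x)$, and your direct use of the martingale identity $\int(y-x)\,\mathds P_{t,t+h}(x,\mathrm dy)=0$ is even a slight streamlining of the paper's appeal to $\mathbf A_t\,x=0$.
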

\begin{proof}
As already observed, if $1+\eta x=0$, then $x$ is an absorbing state and consequently \eqref{At+ dla funkcji g} is trivially satisfied, see \eqref{gen_1_wstep}. So let us assume that $1+\eta x>0$. We will prove \eqref{At+ dla funkcji g} only for $\mathbf{A}^+_t$, because repeating the same argument for $\mathds{P}_{t-h,h}(x,\mathrm{d}y)$ yields the same for $\mathbf{A}^-_t$.\\
Fix $x\in\text{supp}(X_t)$. Let us define a function $\phi_x:\mathbb{R}\to\mathbb{R}$ by the following formula:
$$\phi_x(y):= \left\{ \begin{array}{ll}
\frac{\partial}{\partial x}\tfrac{g(y)-g(x)}{y-x} & \textrm{for $y\ne x$,}\\
\tfrac{1}{2}g''(x) & \textrm{for $y=x$.}
\end{array} \right.$$
By Taylor's theorem, $\phi_x$ is a bounded continuous function since
$$\tfrac{\partial}{\partial x}\tfrac{g(y)-g(x)}{y-x}=\tfrac{1}{(y-x)^2}\int_x^yg''(z)(y-z)\mathrm{d}z\xrightarrow{y\to x}\tfrac{1}{2}g''(x)$$
and
$$\bigg|\tfrac{\partial}{\partial x}\tfrac{g(y)-g(x)}{y-x}\bigg|= \tfrac{1}{(y-x)^2}\bigg|\int_x^yg''(z)(y-z)\mathrm{d}z\bigg|\leq \tfrac{1}{2}\sup_{y\in\mathds{R}}|g''(y)|.$$
Because $g(y)-g(x)=(y-x)g'(x)+\int_x^y\,(y-z)g''(z)\,dz$ we can write
\begin{equation}\label{jhx}
 \int\limits_{\mathds{R}}\tfrac{g(y)-g(x)}{h}\mathds{P}_{t,t+h}(x,\mathrm{d}y)=g'(x)\int\limits_{\mathds{R}}\tfrac{y-x}{h}\mathds{P}_{t,t+h}(x,\mathrm{d}y)+J(h,x),
 \end{equation}
 where
 $$J(h,x)=\int\limits_{\mathds{R}}\tfrac{\int_x^yg''(z)(y-z)\mathrm{d}z}{h}\mathds{P}_{t,t+h}(x,\mathrm{d}y)=\int\limits_{\mathds{R}\setminus\{x\}}\tfrac{\int_x^yg''(z)(y-z)\mathrm{d}z}{(y-x)^2}\cdot\tfrac{(y-x)^2}{h}\mathds{P}_{t,t+h}(x,\mathrm{d}y).$$
 Since $\phi_x(y)=\tfrac{\int_x^yg''(z)(y-z)\mathrm{d}z}{(y-x)^2}$ when $y\neq x$, we get
$$J(h,x)=\int\limits_{\mathds{R}\setminus\{x\}}\phi_x(y)\tfrac{(y-x)^2}{h}\mathds{P}_{t,t+h}(x,\mathrm{d}y)=\int\limits_{\mathds{R}}\phi_x(y)\tfrac{(y-x)^2}{h}\mathds{P}_{t,t+h}(x,\mathrm{d}y).$$
Since the measure $\tfrac{(y-x)^2}{h}\mathds{P}_{t,t+h}(x,\mathrm{d}y)$ converges weakly to $(1+\eta x)\nu_{x,t,\eta,\theta,\tau,q}(\mathrm{d}y)$, we conclude that $\lim\limits_{h\to0^+}J(h,x)=(1+\eta x)\int\limits_{\mathds{R}}\phi_x(y)\nu_{x,t,\eta,\theta,\tau,q}(\mathrm{d}y)$. Consequently, taking the limit when $h\to0^+$ in \eqref{jhx} and using the fact that $\lim\limits_{h\to0^+} \int\limits_{\mathds{R}}\tfrac{y-x}{h}\mathds{P}_{t,t+h}(x,\mathrm{d}y)=\mathbf A_t x=0$, which follows by Theorem \ref{twierdzenie_o_generatorze_wiel},  we obtain the desired result.
\end{proof}
It is worth mentioning that in all known cases  of $QH(\eta,\theta;0,\tau;q)$ processes with $q\in[-1,1)$, the supports of $\mathds{P}_{t,t+h}(x,\mathrm{d}y)$ and $\mathds{P}_{t-h,t}(x,\mathrm{d}y)$ are bounded for any fixed $t, h>0$ and $x\in \mathds{R}$, see \cite{bib_BrycWesolowski_2010}. In such a situation, it is sufficient to assume in Corollary \ref{twierdzenie_o_generatorze_ogolne} that $g$ is a $C^2$ function (we do not need to assume the boundedness). 
\section{More on the measure $\nu_{x,t,\theta,\tau,\eta,q}$}
\label{specjalne wlasnosci}
Note that Theorem \ref{twierdzenie_o_generatorze_wiel} and Corollary \ref{twierdzenie_o_generatorze_ogolne} give explicit formulas for infinitesimal generators in terms of a  measure  $\nu_{x,t}:=\nu_{x,t,\eta,\theta,\tau,q}$. In this section, we embed the polynomials $(B_n(y;x,t))_{n\geq0}$ in the Askey-Wilson scheme, which allows us to describe the measure $\nu_{x,t}$ in terms of measures well-known from the literature. In addition, we relate the measure $\nu_{x,t}$ to the transition probabilities of a  quadratic harness $QH(\eta,\theta;0, \tau;q)$, eventually modifying its parameters slightly. 
\subsection{Case $|q|<1$.}
In this section we assume  \eqref{par_QH} with 
\begin{equation}\label{parametry_dodatkowe}
    \sigma=0\quad\text{and}\quad |q|<1.
\end{equation}
As discussed in the previous section, the probability measures $\tfrac{(y-x)^2}{h(1+\eta x)}\mathds{P}_{t,t+h}(x,\mathrm{d}y)$ and $\tfrac{(y-x)^2}{h(1+\eta x)}\mathds{P}_{t-h,t}(x,\mathrm{d}y)$ converge weakly to the probability measure $\nu_{x,t}$ for all $t>0$. Moreover, the coefficients in the recurrence \eqref{rekurencja_na_wiel_ort_B} at $B_{n-1}$ are bounded, so this measure is uniquely determined. Consequently, for all fixed $t>0$ and $x$ in the support of $X_t$, Favard's theorem implies that the coefficients of $B_{n-1}$ in the three-term recurrence \eqref{rekurencja_na_wiel_ort_B} satisfy 
$$\prod\limits_{n=1}^N\left\{\alpha_t(1+\eta\gamma_t[n]_q+\eta\beta_t[n]_q^2+x\eta q^n)[n+1]_q[n]_q\right\}\geq0$$
for all $N\geq1$. Because $\alpha_t>0$ for $t>0$, the  above condition is equivalent to
\begin{equation}\label{glowny_warunek}
\prod\limits_{n=1}^N(1+\eta\gamma_t[n]_q+\eta\beta_t[n]_q^2+x\eta q^n)\geq0\quad \text{for all }N\geq1;
\end{equation}
note that $q\in(-1,1)$ implies $[n]_q>0$ for all $n\geq1$. According to \eqref{glowny_warunek}, only the following two situations are possible.\\ 
$\bullet$ The first, if there is $n\geq1$ such that
\begin{equation}\label{=0}
    1+\eta\gamma_t[n]_q+\eta\beta_t[n]_q^2+x\eta q^n=0,
\end{equation}
then $\nu_{x,t}$ is supported on distinct zeros of the polynomial $B_{N_0}(\cdot;x,t)$, where $N_0$ is the smallest $n\geq1$ satisfying \eqref{=0}, see \cite[Theorem A.1]{bib_BrycWesolowski_2010}. Moreover, for all $1\leq n< N_0$ we have
\begin{equation*}
    1+\eta\gamma_t[n]_q+\eta\beta_t[n]_q^2+x\eta q^n>0,
\end{equation*}
which imposes additional constraints on the parameters $\eta$, $\theta$, $\tau$, $q$ and on the support of $X_t$ for fixed $t>0$.\\
$\bullet$ The second, if for all $n\geq1$
\begin{equation*}
    1+\eta\gamma_t[n]_q+\eta\beta_t[n]_q^2+x\eta q^n>0.
\end{equation*}
Then taking a limit as $n\to\infty$, we get
\begin{equation}\label{rownanie_ogolne}
    0\leq 1+\tfrac{\eta\gamma_t}{1-q}+\tfrac{\eta\beta_t}{(1-q)^2} =1+\tfrac{\eta\widetilde{\theta}}{1-q},
\end{equation}
where 
\begin{equation}\label{theta_falka}
\widetilde{\theta}:=\theta+\tfrac{\eta\tau}{1-q}.
\end{equation}
Consequently, not all combinations of the parameters of the quadratic harness are possible. Moreover, identity \eqref{glowny_warunek} for $N=1$ implies
\begin{equation}\label{pierwsze_N}
    1+\eta\theta+\eta^2\tau\geq 0.
\end{equation}
Indeed, taking a limit as $t\searrow 0$ and considering a version of the process with c\'adl\'ag trajectories (the quadratic harness has a martingale property, see \eqref{martyngal}, so such a version exists), we get that $x$ tends to zero and consequently \eqref{pierwsze_N} follows.\\
As a result if $\tau=0$, then \eqref{rownanie_ogolne} and \eqref{pierwsze_N} imply that $QH(\eta,\theta;0,0;q)$ exists only when
$$1+\eta\theta\geq\max\{0,q\}.$$
Under the above condition, the construction of the bi-Poisson process is done, see Section \ref{Bi-P} in the appendix.

In general, the measure  $\nu_{x,t,\eta,\theta,\tau,q}$ can be conveniently described as a linear transformation of a measure occurring in the Askey-Wilson scheme.
\begin{theorem}\label{twierdzenie_o_miarach} 
Let us assume \eqref{rownanie_ogolne}. Then for $x\in\mathds{R}$ we have
\begin{equation}\label{miara_ni_na_borelowskich}
  \nu_{x,t,\eta,\theta,\tau,q}(A)=\mu(\{\tfrac{y-w}{u}:y\in A\}),\quad A\in\mathcal{B}(\mathds{R}),  
\end{equation}
where 
\begin{enumerate}
    \item[(i)] for $1-q+\eta\widetilde{\theta}>0$ 
    $$u=-\tfrac{\sqrt{(1-q)^3}}{2\sqrt{\alpha_t}\sqrt{1-q+\eta\widetilde{\theta}}}, \quad w=\tfrac{(1-q)\widetilde{\theta}+\beta_t}{2\sqrt{\alpha_t}\sqrt{1-q}\sqrt{1-q+\eta\widetilde{\theta}}},$$
    and $\mu$ is a orthogonality measure of Askey-Wilson polynomials, see \eqref{rekurencja_Akey_Wilson}, with parameters 
    \small
    \begin{equation}\label{abc}
    \begin{split}
      a&=\tfrac{\eta\sqrt{\alpha_t}}{\sqrt{(1-q)(1-q+\eta\widetilde{\theta})}},\\
      b,c&=\tfrac{1}{2\sqrt{\alpha_t}}\left(q\tfrac{(1-q)\widetilde{\theta}\,+\,\beta_t-(1-q)^2x}{\sqrt{(1-q)(1-q+\eta\widetilde{\theta}})}\pm|q|\sqrt{\tfrac{\big((1-q)\widetilde{\theta}+\beta_t-(1-q)^2x\big)^2}{(1-q)(1-q+\eta\widetilde{\theta})}-4\alpha_t}\right)
      \end{split}
    \end{equation}
    \normalsize
    and $d=0$,
    \item[(ii)] for $1-q+\eta\widetilde{\theta}=0$ and\\
    \begin{enumerate} 
    \item  $q=0$: 
    $u=1$, $w=-\widetilde\theta$ and $\mu=\delta_0$,
    \item $q\ne0$ and $\tfrac{\eta \beta_t}{(1-q)^2}\ne1+x\eta$: $$u=-\tfrac{q\eta(1-q)^2}{\eta\beta_t-(1+x\eta)(1-q)^2}\quad\mbox{and}\quad  w=-u\tfrac{\eta\beta_t-(1-q)^2}{\eta(1-q)^2}$$    and $\mu$ is a orthogonality measure of Big $q$-Jacobi polynomials, see \eqref{rekurencja q-Racah}, with parameters:
    \begin{equation}\label{parametry_q_Racaha}
        a=q,\quad b=0\quad \text{and}\quad  c=\tfrac{\eta\beta_t}{\eta\beta_t-(1+\eta x)(1-q)^2},
    \end{equation}
    \item $q\ne0$ and $\tfrac{\eta \beta_t}{(1-q)^2}=1+x\eta$: $$u=\tfrac{(1-q)^2}{\beta_t}\quad\mbox{and}\quad w=\tfrac{u}{\eta}$$
    and $\mu$ is a orthogonality measure of Little q-Jacobi polynomials with $a=q$ and $b=0$, see \eqref{rekurencja q-little}.
    \end{enumerate}
    \end{enumerate}
\end{theorem}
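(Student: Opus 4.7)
The plan is to reduce the identification of $\nu_{x,t,\eta,\theta,\tau,q}$ to matching the three-term recurrence \eqref{rekurencja_na_wiel_ort_B} against a known one after an affine change of variable. Under the standing assumption \eqref{parametry_dodatkowe}, the polynomials $(B_n(\cdot;x,t))$ have bounded recurrence coefficients, so the orthogonality measure is supported on a compact set and is therefore uniquely determined by its moments (see \cite[Theorems~2.5.4 and 2.5.5]{Ismail}). Thus, any affine rescaling of $B_n$ that satisfies a recurrence from the Askey--Wilson scheme forces the measure identity \eqref{miara_ni_na_borelowskich}, and the task reduces to choosing $u$, $w$ and target parameters so that the rescaled recurrence matches the target.

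I would first set $\widetilde B_n(z):=u^{-n}B_n(uz+w;x,t)$ and substitute $y=uz+w$ into \eqref{rekurencja_na_wiel_ort_B}; dividing by $u^{n+1}$ yields the monic recurrence
\begin{equation*}
z\,\widetilde B_n(z)=\widetilde B_{n+1}(z)+\frac{a_n-w}{u}\,\widetilde B_n(z)+\frac{b_n}{u^2}\,\widetilde B_{n-1}(z),
\end{equation*}
with $a_n$ and $b_n$ denoting the diagonal and subdiagonal coefficients of \eqref{rekurencja_na_wiel_ort_B}. Writing $[n]_q=(1-q^n)/(1-q)$ expands both $a_n$ and $b_n$ as linear combinations of $1,\,q^n,\,q^{2n}$; crucially, the $n\to\infty$ limit of the factor $1+\eta\gamma_t[n]_q+\eta\beta_t[n]_q^2+x\eta q^n$ appearing in $b_n$ equals $(1-q+\eta\widetilde\theta)/(1-q)$, which pinpoints exactly when one is in case (i) or case (ii). In case (i), I would match $(a_n-w)/u$ and $b_n/u^2$ coefficient-by-coefficient in the powers of $q^n$ with the monic Askey--Wilson recurrence \eqref{rekurencja_Akey_Wilson} specialised to $d=0$. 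The $q^n$-coefficient on the diagonal fixes $u$, the $q^0$-coefficient fixes $w$, and the subdiagonal identities produce the stated $a$ directly together with a quadratic equation for $(b,c)$ whose solution via the quadratic formula is the explicit source of the radical and of the sign $\pm|q|$ in \eqref{abc}.

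Case (ii), where $1-q+\eta\widetilde\theta=0$, is the degenerate boundary: the $[n]_q^2$-coefficient of $b_n$ vanishes, so the normalisation of $u$ from case (i) becomes singular and must be replaced with one driven by the next-to-leading term of $b_n$. Subcase (ii-a), where $q=0$, is immediate since the recurrence coefficients stabilise for $n\ge 1$ and the orthogonality measure collapses to $\delta_0$ after the shift $y\mapsto y-\widetilde\theta$. Subcases (ii-b) and (ii-c) correspond to matching with the Big and Little $q$-Jacobi recurrences \eqref{rekurencja q-Racah} and \eqref{rekurencja q-little}; the branching at $\eta\beta_t/(1-q)^2=1+x\eta$ is exactly the condition under which a further coefficient in $b_n$ vanishes, forcing $b=0$ and thereby the transition from Big to Little $q$-Jacobi. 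The principal obstacle throughout will be the bookkeeping in case (i): expanding $a_n$ and $b_n$ in the basis $\{1,q^n,q^{2n}\}$ and verifying that the resulting scalar equations are simultaneously solved by the explicit $u,w,a,b,c$ from the statement requires careful tracking of cancellations between $\alpha_t$, $\beta_t=\eta\alpha_t$, $\gamma_t=\theta-\eta t$ and $\widetilde\theta$ from \eqref{theta_falka}, but is entirely mechanical once the correct Jacobi parametrisation has been fixed.
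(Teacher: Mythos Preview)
Your approach is essentially the paper's own: define an affinely rescaled monic family from $B_n$, and verify case by case that the rescaled three-term recurrence coincides with the target recurrence from the Askey--Wilson scheme, so that uniqueness of the (compactly supported) orthogonality measure forces \eqref{miara_ni_na_borelowskich}. The paper's proof is in fact terser than yours---it simply records the substitution $p_n(y)=u^nB_n((y-w)/u;x,t)$ and asserts the identification in each subcase---so your more explicit plan of expanding the Jacobi coefficients in powers of $q^n$ and solving for $u,w,a,b,c$ is a faithful elaboration of the same argument; one small imprecision is that $b_n$ is quartic rather than quadratic in $q^n$ (both the factor $1+\eta\gamma_t[n]_q+\eta\beta_t[n]_q^2+x\eta q^n$ and $[n+1]_q[n]_q$ are quadratic), but the matching still goes through since the Askey--Wilson subdiagonal $\tfrac14 A_{n-1}C_n$ with $d=0$ is likewise quartic.
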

\begin{proof}
Consider polynomials $(p_n(y))_{n\geq0}$ given by
\begin{equation}\label{p_a_B}
    p_n(y):=u^nB_n(\tfrac{y-w}{u};x,t), \quad n\geq0,
\end{equation} 
where the parameters $u$ and $w$ are given in the formulation of the theorem and generally depend on $x$ and $t$. In view of  \eqref{rownanie_ogolne}, we consider only two cases:
\begin{enumerate}
     \item[(i)] $1-q+\eta\widetilde{\theta}>0$. Then
      $(p_n(y))_{n\geq0}$ in \eqref{p_a_B}  are the Askey-Wilson polynomials with parameters \eqref{abc}. Under the assumed range of parameters (see \eqref{par_QH} and \eqref{parametry_dodatkowe}), $b$ and $c$ are either a complex conjugate pair or both real.
    \item[(ii)] $1-q+\eta\widetilde{\theta}=0$. As a result,  $\eta\ne0$.
     \begin{enumerate} \item  $q=0$: in this case
      we have $p_n(y)=y(y-\beta_t)^{n-1}$, $n\geq1$.
    \item $q\ne0$ and $\tfrac{\eta \beta_t}{(1-q)^2}\ne1+x\eta$: it is easy to verify that  $(p_n(y))_{ n\ge 0}$ in \eqref{p_a_B} are the Big $q$-Jacobi polynomials with parameters \eqref{parametry_q_Racaha}.
    \item $q\ne0$ and $\tfrac{\eta \beta_t}{(1-q)^2}=1+x\eta$: then the polynomials $(p_n(y))_{ n\ge 0}$ are the Little q-Jacobi polynomials with $a=q$ and $b=0$, see \eqref{rekurencja q-little}.
    \end{enumerate}
\end{enumerate}
In the simplest case (ii)(a) we clearly have  $\mu=\delta_{0}$, see \cite[Thereom A.1.]{bib_BrycWesolowski_2010}. In all other cases, the polynomials $(p_n(y))_{n\geq0}$ as well as their orthogonality measures $\mu$ are well known in the literature. Consequently, in view of \eqref{p_a_B}, the measure $\nu_{x,t}$ is identified through \eqref{miara_ni_na_borelowskich}.
\end{proof} 
The measures described in Theorem \ref{twierdzenie_o_miarach} for some parameters may have only an absolutely continuous part, but for other parameters they may also have atoms. Note also that these parameters depend on the support of the process $(X_t)_{t\geq0}$. 

Let us examine in more detail the case when $1-q+\eta\widetilde{\theta}>0$. The parameters $b$ and $c$, see \eqref{abc}, depend on  $x\in\text{supp}(X_t)$. If $x$ belongs to the absolutely continuous part of the support of $X_t$, i.e. to the interval \eqref{przedzial_uw}, then the measure $\nu_{x,t}$ has only the absolutely continuous part.  Indeed, fix $x$ in the interval \eqref{przedzial_uw} for some $t\geq0$. Then we have
$$\left(x-\tfrac{\widetilde{\theta}+ \tfrac{\beta_t}{1-q}}{1-q}\right)^2\leq \tfrac{4\alpha_t(1-q+\eta\widetilde{\theta})}{(1-q)^3}.$$
Therefore, $\tfrac{\big((1-q)\widetilde{\theta}+\beta_t-(1-q)^2x\big)^2}{(1-q)(1-q+\eta\widetilde{\theta})}\leq 4\alpha_t$, so the expression under a square root in \eqref{abc} is negative, hence $b$ and $c$ are complex conjugate and
$$a>0,\quad |b|^2=|c|^2=q^2<1.$$
Consequently, using the notation  from Section \ref{Askey-Wilson polynomials}, we have $m_1=m_2=0$, so the Askey-Wilson distribution $\mu$ exists and has only a continuous component.\\
Now let us assume 
\begin{equation}\label{parametry}
1+\eta\widetilde{\theta}\geq \max\{0,q\},
\end{equation}
which is slightly stronger than \eqref{rownanie_ogolne}.
For $\tau=0$, condition \eqref{parametry} is not so much restrictive since \eqref{pierwsze_N} holds. For $\tau>0$, conditions \eqref{rownanie_ogolne} and \eqref{pierwsze_N} are incomparable with \eqref{parametry}, especially when $q<0$.\\
It appears that, under the assumption \eqref{parametry}, the measure $\nu_{x,t}$ can be related to transition probabilities of the bi-Poisson process described in detail in the Appendix, see in particular Section \ref{Bi-P}.
\begin{theorem}\label{twierdzenie_o_mu}
Let us assume \eqref{parametry} and consider a bi-Poisson process $QH(\eta,\widetilde{\theta};0,0;q)$ with transition probabilities $\mathds{Q}_{s,t}(x,\mathrm{d} y)$, $0\le s<t$. Then for all $t>0$ and $x\in \mathds{R}$ we have
$$\nu_{x,t,\eta,\theta,\tau,q}(\mathrm{d} y)=\mathds{Q}_{\tfrac{q^2\alpha_t}{1-q}, \tfrac{\alpha_t}{1-q}}(qx+\gamma_t+\beta_t,\mathrm{d} y).$$ 
\end{theorem}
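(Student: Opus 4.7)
The plan is to reduce the claimed measure identity to a coincidence of three-term recurrences and then conclude via uniqueness of the orthogonality measure. The left-hand measure is, by definition, the probabilistic orthogonality measure of the family $(B_n(\cdot;x,t))_{n\ge 0}$ from \eqref{rekurencja_na_wiel_ort_B}. The right-hand measure is the transition distribution of the bi-Poisson process $QH(\eta,\widetilde\theta;0,0;q)$ at instants $s'=q^2\alpha_t/(1-q)$, $t'=\alpha_t/(1-q)$, started from $x'=qx+\gamma_t+\beta_t$; this distribution is itself the orthogonality measure of an explicit family of monic polynomials $(\widetilde B_n(\cdot;x',s',t'))_{n\ge 0}$ whose three-term recurrence is recorded in the Appendix (Section \ref{Bi-P}). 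Since the present section is confined to $|q|<1$, the recurrence coefficients are bounded for fixed $x,t$, so by Theorems~2.5.4--2.5.5 in \cite{Ismail} each orthogonality measure is uniquely determined by the associated Jacobi parameters. It therefore suffices to verify that, after the above substitution, the transformed bi-Poisson recurrence coincides with \eqref{rekurencja_na_wiel_ort_B} coefficient by coefficient.

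The verification itself is a calculation with $q$-integers, driven by two observations. First, the subdiagonal of the bi-Poisson transition recurrence contains the time-dependent factor $(1-q^n)(t'-q^{n-1}s')/(1-q)$; under $s'=q^2 t'$ this collapses to $(1-q^n)(1-q^{n+1})t'/(1-q)=\alpha_t\,[n]_q[n+1]_q$, producing exactly the factor $\alpha_t[n+1]_q[n]_q$ appearing in \eqref{rekurencja_na_wiel_ort_B}. Second, the parameter identity $\widetilde\theta=\gamma_t+\beta_t/(1-q)$, which follows from \eqref{oznaczenia} and \eqref{theta_falka}, together with $\beta_t=\eta\alpha_t$ and $[n+1]_q=(1-q^{n+1})/(1-q)$, allows one to re-express the $\widetilde\theta$-dependent pieces of the bi-Poisson recurrence as combinations of $\gamma_t,\beta_t,[n]_q,[n+1]_q$. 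Bookkeeping then delivers both the diagonal coefficient $(\gamma_t+\beta_t([n+1]_q+[n]_q))[n+1]_q+xq^{n+1}$ and the bracket $1+\eta\gamma_t[n]_q+\eta\beta_t[n]_q^2+x\eta q^n$ on the subdiagonal of \eqref{rekurencja_na_wiel_ort_B}.

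The main obstacle is organising this verification so that the $x$-dependent contributions balance correctly. The shift $x'=qx+\gamma_t+\beta_t$ splits every occurrence of $x'q^k$ in the bi-Poisson recurrence as $q^{k+1}x+q^k(\gamma_t+\beta_t)$, and the residual $q^k(\gamma_t+\beta_t)$-pieces have to be absorbed into the $\widetilde\theta[n]_q$-type terms of the bi-Poisson recurrence. This absorption is not incidental; it works precisely because of the choice $\widetilde\theta=\gamma_t+\beta_t/(1-q)$ and the relation $s'=q^2 t'$ that are built into the substitution, and tracking all the $q$-powers simultaneously on the diagonal and subdiagonal is the most delicate part of the argument. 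Once coefficient-by-coefficient matching is established, the two monic orthogonal polynomial systems coincide and, by the uniqueness of the orthogonality measure valid throughout $|q|<1$, so do the measures themselves, which is the asserted identity.
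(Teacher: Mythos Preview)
Your proposal is correct and follows essentially the same approach as the paper: compare the three-term recurrence \eqref{rekurencja_na_wiel_ort_B} for $(B_n)$ with the bi-Poisson recurrence \eqref{rekurencja_na_wiel_Q} (with $\theta$ replaced by $\widetilde\theta$) evaluated at $x'=qx+\gamma_t+\beta_t$, $t'=\alpha_t/(1-q)$, $s'=q^2\alpha_t/(1-q)$, and then conclude equality of measures by uniqueness. The paper's proof simply states ``comparing proper recurrences gives $B_n(y;x,t)=Q_n(y;qx+\gamma_t+\beta_t,\tfrac{\alpha_t}{1-q},\tfrac{q^2\alpha_t}{1-q})$'' without spelling out the bookkeeping you outline, and additionally records the incidental identity $B_n(y;x,t)=\tfrac{Q_{n+1}(y;x,\alpha_t/(1-q),\alpha_t/(1-q))}{y-x}$, but this plays no role in establishing the theorem.
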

\begin{proof}
Under the assumed range of parameters. $QH(\eta,\widetilde{\theta};0,0;q)$ is well-defined. Then comparing proper recurrences gives 
$$B_n(y;x,t)=Q_n(y;q x+\gamma_t+\beta_t,\tfrac{\alpha_t}{1-q},\tfrac{q^2\alpha_t}{1-q})=\tfrac{Q_{n+1}\left(y;x,\tfrac{\alpha_t}{1-q},\tfrac{\alpha_t}{1-q}\right)}{y-x}$$ 
for all $n\geq0$, $t\geq0$ and $x\in\mathds{R}$, where $(Q_n(y;x,t,s))_{n\geq0}$ satisfy  \eqref{rekurencja_na_wiel_Q} with $\theta$ replaced by $\widetilde{\theta}$ (since we consider $QH(\eta,\widetilde{\theta};0,0,q)$). Consequently $(B_n(y;x,t))_{n\geq 0}$ are orthogonal with respect to the probability measure $\mathds{Q}_{\tfrac{q^2\alpha_t}{1-q}, \tfrac{\alpha_t}{1-q}}(qx+\gamma_t+\beta_t,\mathrm{d} y)$ 
for all $x\in\mathds{R}$.
\end{proof}
\begin{remark}
    Since distributions of quadratic harnesses are obtained as linear transformations of Askey-Wilson distributions, the assumptions of Proposition 1.2 in \cite{bridges} are satisfied, and $QH(\eta,\theta;0,\tau;q)$ can be obtained as a one-sided bridge in the bi-Poisson process $QH(\eta,\widetilde{\theta};0,0;q)$, see \cite[Remark 2.4, (ii)]{bridges} with $r:=\tfrac{\tau}{1-q}$ and $z_r:=0$ if only zero is in the support of $QH(\eta,\widetilde{\theta};0,0;q)$ at time $r$. Particularly, the interval given in \eqref{przedzial} for $s=\tfrac{\tau}{1-q}$ is in that support, so it is easy to verify that $0$ is in this interval if and only if $\theta^2\leq 4\tau$. Moreover, zero is also trivially in the support at time $r$ if $\tau=0$.\\
    As a result in these cases the transition probabilities $\mathds P_{s,t}$ and $\mathds{Q}_{\tilde s,\tilde t}$ of $QH(\eta,\theta;0,\tau;q)$ and $QH(\eta,\widetilde{\theta};0,0;q)$, respectively, satisfy     
    \begin{equation}\label{P_a_Q}
    \mathds P_{s,t}(x,\mathrm{d} y)=
        \mathds{Q}_{\tfrac{\alpha_s}{1-q},\tfrac{\alpha_t}{1-q}}(x,\mathrm{d} y), \qquad 0\leq s<t, \quad x\in\mathds{R}
    \end{equation}
    and in view of Theorem \ref{twierdzenie_o_mu} we additionally have 
    $$\nu_{x,t,\eta,\theta,\tau,q}(\mathrm{d} y)=\mathds{P}_{q^2t-(1+q)\tau,t}(qx+\gamma_t+\beta_t,\mathrm{d} y)$$ 
    when $q^2t\geq(1+q)\tau$.
\end{remark}
\begin{remark}
The relation \eqref{P_a_Q} can  also be obtained by comparing the recurrence for $(Q_n(y;x,\tfrac{\alpha_t}{1-q},\tfrac{\alpha_s}{1-q}))_{n\geq0}$ satisfying  \eqref{rekurencja_na_wiel_Q} with $\theta$ replaced by $\widetilde{\theta}$ with the recurrence for $(\widetilde{Q}_n(y;x,t,s))_{n\geq0}$ satisfying \eqref{rekurencja_na_wiel_Q_falka}. The uniqueness of the orthogonality measure implies the statement.
\end{remark}

According to Theorems \ref{twierdzenie_o_generatorze_wiel} and \ref{twierdzenie_o_mu}, the infinitesimal generator  of $QH(\eta,\theta;0,\tau;q)$ (whose parameters satisfy additional assumptions, including \eqref{parametry}) acting on polynomials is given by
\begin{equation*}
\mathbf{A}_t(f)(x)=(1+\eta x)\int\limits_{\mathds{R}}\frac{\partial}{\partial x} \frac{f(y)-f(x)}{y-x}\mathds{P}_{q^2t-(1-q)\tau,t}(qx+\gamma_t+\beta_t,\mathrm{d}y)
\end{equation*}
for all $x\in\text{supp}(X_t)$. The infinitesimal generator $\mathbf{A}_t$ maps polynomials to polynomials in the variable $x$ on the suitably large set containing the interval \eqref{przedzial_uw}, so the formula for $\mathbf{A}_t$ is extendable for all $x\in\mathds{R}$.

In addition, we can also represent the infinitesimal generator using the transition function of another quadratic harness.

\begin{theorem}
If $QH(\eta,\theta+(1+q)\eta\tau;0,q^2\tau;q)$  with transition probabilities $\mathds{\widetilde{P}}_{s,t}(x,\mathrm{d}y)$ exists, then for all $t\geq0$ and $x\in\mathds{R}$
$$\nu_{x,t,\eta,\theta,\tau,q}(\mathrm{d} y)=\mathds{\widetilde{P}}_{q^2t,t+\tau(1+q)}(qx-q \eta t+\theta+\eta\tau,\mathrm{d} y).$$
\end{theorem}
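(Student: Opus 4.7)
The plan is to reduce the claim to Theorem \ref{twierdzenie_o_mu} by recognizing that the modified harness $QH(\eta,\theta+(1+q)\eta\tau;0,q^2\tau;q)$ is produced by the same bi-Poisson limit as the original. Writing $\theta' := \theta+(1+q)\eta\tau$ and $\tau' := q^2\tau$, the $\widetilde{\theta}$-quantity from \eqref{theta_falka} associated with the modified parameters is
\begin{equation*}
\theta' + \tfrac{\eta\tau'}{1-q} = \theta + \eta\tau\Bigl[(1+q)+\tfrac{q^2}{1-q}\Bigr] = \theta + \tfrac{\eta\tau}{1-q} = \widetilde{\theta},
\end{equation*}
so the bi-Poisson $QH(\eta,\widetilde{\theta};0,0;q)$ is \emph{the same} for the modified and the original quadratic harnesses. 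This coincidence is the structural heart of the argument.

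First I would apply \eqref{P_a_Q} to the modified harness, writing $\widetilde{\mathds P}_{s',t'}(x',\mathrm{d}y) = \mathds{Q}_{\alpha'_{s'}/(1-q),\,\alpha'_{t'}/(1-q)}(x',\mathrm{d}y)$, where $\alpha'_u := \tau' + (1-q)u = q^2\tau + (1-q)u$ and $\mathds Q$ is the transition probability of $QH(\eta,\widetilde\theta;0,0;q)$. Next, specializing $s' = q^2 t$ and $t' = t+\tau(1+q)$, a direct computation gives
\begin{equation*}
\alpha'_{s'} = q^2\tau + (1-q)q^2 t = q^2\alpha_t,\qquad \alpha'_{t'} = q^2\tau + (1-q)t + (1-q^2)\tau = \alpha_t,
\end{equation*}
matching exactly the bi-Poisson time arguments appearing in Theorem \ref{twierdzenie_o_mu}. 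Similarly, since $\gamma_t + \beta_t = (\theta-\eta t)+\eta(\tau+(1-q)t) = \theta+\eta\tau-q\eta t$, the specified initial point satisfies $qx+\gamma_t+\beta_t = qx-q\eta t+\theta+\eta\tau = x'$. Combining with Theorem \ref{twierdzenie_o_mu} then yields
\begin{equation*}
\nu_{x,t,\eta,\theta,\tau,q}(\mathrm{d}y) = \mathds{Q}_{q^2\alpha_t/(1-q),\,\alpha_t/(1-q)}(qx+\gamma_t+\beta_t,\mathrm{d}y) = \widetilde{\mathds P}_{q^2 t,\,t+\tau(1+q)}(x',\mathrm{d}y),
\end{equation*}
which is the claim.

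The main obstacle is that \eqref{P_a_Q} was established in the preceding remark only under a support condition: namely that $0$ lies in the support of $QH(\eta,\widetilde\theta;0,0;q)$ at time $\tau'/(1-q)$, which translates to $(\theta+(1+q)\eta\tau)^2\le 4q^2\tau$ or the trivial $\tau=0$. When this fails, one instead argues by directly comparing three-term recurrences: the martingale orthogonal polynomials $\widetilde Q_n(\cdot;x',t',s')$ of the modified harness satisfy \eqref{rekurencja_na_wiel_Q_falka} with primed parameters, and plugging in $s'=q^2 t$, $t'=t+\tau(1+q)$, $x'=qx+\gamma_t+\beta_t$, the coefficients collapse via $\alpha'_{s'}=q^2\alpha_t$, $\alpha'_{t'}=\alpha_t$, $\gamma'_{t'}=\gamma_t$, $\beta'_{t'}=\beta_t$ term by term to those of \eqref{rekurencja_na_wiel_ort_B}. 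Since $|q|<1$ guarantees uniqueness of the orthogonality measure (see the discussion after Corollary \ref{twierdzenie_o_generatorze_ogolne}), this identification of recurrences suffices to conclude.
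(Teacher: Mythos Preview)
Your fallback argument---directly matching the three-term recurrence \eqref{rekurencja_na_wiel_Q_falka} for $\widetilde Q_n$ (with the primed parameters $\theta'=\theta+(1+q)\eta\tau$, $\tau'=q^2\tau$) at $s'=q^2t$, $t'=t+\tau(1+q)$, $x'=qx-q\eta t+\theta+\eta\tau$ against the recurrence \eqref{rekurencja_na_wiel_ort_B} for $B_n$, and then invoking uniqueness of the orthogonality measure---is exactly the paper's proof. The paper skips your detour entirely and just asserts the polynomial identity $B_n(y;x,t)=\widetilde Q_n(y;x',t',s')$, which is verified by the same coefficient bookkeeping you outline via $\alpha'_{s'}=q^2\alpha_t$, $\alpha'_{t'}=\alpha_t$, $\gamma'_{t'}=\gamma_t$, $\beta'_{t'}=\beta_t$.

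Your primary route through Theorem \ref{twierdzenie_o_mu} and \eqref{P_a_Q} is an elegant observation (the key being that the modified harness has the same $\widetilde\theta$, hence the same bi-Poisson bridge), but as you note it needs the support condition for \eqref{P_a_Q}, and it also tacitly requires the hypothesis \eqref{parametry} of Theorem \ref{twierdzenie_o_mu}, neither of which the theorem assumes. Since you end up needing the direct recurrence comparison anyway to cover the remaining cases, the bi-Poisson detour buys conceptual insight but no economy; the paper's choice to go straight to the recurrence is shorter and assumption-free.
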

\begin{proof}
Because the polynomials $(B_n(y;x,t))_{n\geq0}$ given in \eqref{rekurencja_na_wiel_ort_B} can be written in terms of the polynomials $(\widetilde{Q}_n(y;x,t,s))_{n\geq0}$ given in \eqref{rekurencja_na_wiel_Q_falka} with $\tau$ replaced by $q^2\tau$ and $\theta$ by $\theta+(1+q)\eta\tau$, respectively (since we are considering such a quadratic harness). Namely, the following identity is satisfied
$$B_n(y;x,t)=\widetilde{Q}_n(y;qx-q \eta t+\theta+\eta\tau,t+\tau(1+q),q^2t), \quad n\geq0.$$
Then the thesis holds by the uniqueness of the orthogonality measure of $(B_n(y;x,t))_{n\geq 0}$.
\end{proof}
To conclude this subsection, we derive the exact formula for the infinitesimal generator in the special case of a free quadratic harness.
\begin{example} 
When $q=0$, the recurrence \eqref{rekurencja_na_wiel_ort_B} assumes the form
\begin{equation*}
\begin{split}
B_0(y;x,t)=1, \quad \quad B_1(y;&x,t)=y-\theta-\eta\tau,\\
yB_{n}(y;x,t)=B_{n+1}(y;x,t)&+(\theta+\eta\tau-\eta t)B_{n}(y;x,t)\\
&+(\tau+t)\big(1+\eta\theta+\eta^2\tau\big)B_{n-1}(y;x,t) \quad\text{ for } n\geq 1.
\end{split}
\end{equation*}
It is easy to check that the polynomials $(B_n(y;x,t))_{n\geq0}$ are orthogonal with respect to a probability measure if and only if $1+\eta\theta+\eta^2\tau\geq0$. Therefore, if $1+\eta\theta+\eta^2\tau<0$, then the quadratic harness $QH(\eta,\theta;0,\tau;0)$ does not exist.\\
When $1+\eta\theta+\eta^2\tau=0$, then  $(B_n(y;x,t))_{n\geq0}$ are orthogonal with respect to a Dirac measure concentrated in the point $\theta+\eta\tau$. Polynomials $(B_n(y;x,t))_{n\geq 0}$ do not depend on $x$, so the measure does not depend on $x$ as well.\\
When $1+\eta\theta+\eta^2\tau>0$, comparing respective recurrences, we conclude that 
$$B_n(y;x,t)=(2\sigma_t)^nP^*_n\left(\tfrac{y-m_t}{2\sigma_t},\tfrac{\eta  t}{2 \sigma_t}\right), \quad n\geq0,$$
where $m_t:=\theta+\eta\tau-\eta t$, $\sigma_t^2:=(\tau+t)\big(1+\eta\theta+\eta^2\tau\big)$ and $(P_n^*(y,c))_{n\geq0}$ are the polynomials the same as in Example $(a)$ from \cite[Section 5]{chihara_art}. Consequently,  if $|\eta t|\leq \sigma_t$, then $(B_n(y;x,t))_{n\geq0}$ are orthogonal with respect to 
$$\nu_{x,t}(\mathrm{d}y)=\tfrac{1}{2\pi}\tfrac{\sqrt{4\sigma_t^2-(y-m_t)^2}}{\eta^2 t^2+\sigma_t^2-\eta t(y-m_t)}\mathds{1}_{(m_t-2\sigma_t,m_t+2\sigma_t)}(y)\,\mathrm{d}y.$$
Therefore,
\begin{equation*}
\mathbf{A}_tf(x)=\frac{1+\eta x}{2\pi}\int\limits_{m_t-2\sigma_t}
^{m_t+2\sigma_t}\frac{\partial}{\partial x} \frac{f(y)-f(x)}{y-x}\tfrac{\sqrt{4\sigma_t^2-(y-m_t)^2}}{\eta^2 t^2+\sigma_t^2-\eta t(y-m_t)}\mathrm{d} y
\end{equation*}
for $f$ being a polynomial or a continuous function with a continuous second derivative (in this case,  the supports of the transition probabilities are compact, so  $f$ is bounded on the support). 
In particular, when $\eta=0$, the measure $\nu_{x,t}$ is the Wigner's semicircle law with the mean $\theta$ and the variance $\tau+t$. If $|\eta t|> \sigma_t$, then $(B_n(y;x,t))_{n\geq0}$ are orthogonal with respect to 
$$\nu_{x,t}(\mathrm{d}y)=\tfrac{1}{2\pi}\tfrac{\sqrt{4 \sigma_t^2-(y-m_t)^2}}{\eta t(u_t-y)}\mathds{1}_{(m_t-2\sigma_t,m_t+2\sigma_t)}(y)\mathrm{d}y+\left(1 - \tfrac{\sigma_t^2}{\eta^2t^2}\right)\delta_{u_t}(\mathrm{d}y),$$
where $u_t:=\eta t+m_t+\tfrac{\sigma_t^2}{\eta t}$. Thus, in this case, we need to include an additional summand in the formula for the infinitesimal generator arising from the atom of $\nu_{x,t}$.
\end{example}
\subsection{Case q=-1.} In this case the polynomials $(B_n(y;x,t))_{n\geq0}$ are orthogonal with respect to a Dirac measure $\delta_{\theta+\eta(t+\tau)-x}$ at the point $\theta+\eta(t+\tau)-x$, because the coefficient at $B_{n-2}$ in the recurrence \eqref{rekurencja_na_wiel_ort_B} vanishes ($[2n]_q=0$ for all $n\geq0$). Therefore, Theorem \ref{twierdzenie_o_generatorze_wiel} and Corollary \ref{twierdzenie_o_generatorze_ogolne} imply that the domain of the infinitesimal generator contains polynomials and bounded  continuous functions with bounded continuous second derivative $g$, and it takes a form:
\small
\begin{equation}\label{generator_q=-1}
	 \mathbf{A}_t(g)(x)= \left\{ \begin{array}{ll}
	 	\tfrac{1+\eta x}{2}g''(x) & \textrm{when $\theta+\eta(t+\tau)=2x$,}\\
	 	\tfrac{1+\eta x}{\theta+\eta(t+\tau)-2x}\left(\tfrac{g(\theta+\eta(t+\tau)-x)-g(x)}{\theta+\eta(t+\tau)-2x}-g'(x)\right)&  \textrm{when $\theta+\eta(t+\tau)\ne 2x$.}
	 \end{array} \right.
\end{equation}
\normalsize
Furthermore, the construction of bi-Poisson process $QH(\eta,\theta;0,0;-1)$ was presented when $1+\eta\theta\geq0$ in \cite[Subsection 3.2.]{bib_BrycMatysiakWesolowski_1}. Especially, there exists  $QH(\eta,\theta+\eta\tau;0,0;-1)$ with the parameters satisfying  \eqref{pierwsze_N}.\\
Surprisingly, the tedious calculations show that this process satisfies \eqref{def_harness} and \eqref{def_QH}. Therefore, $QH(\eta,\theta+\eta\tau;0,0;-1)$ is also $QH(\eta,\theta;0,\tau;-1)$.\\
Then, by reading out the transition probabilities, see  \cite[Subsection 3.2.]{bib_BrycMatysiakWesolowski_1}, it is easy to compute the infinitesimal generator directly and obtain exactly the formula from the second line in \eqref{generator_q=-1}. 

\subsection{Case q=-1.} In this case the polynomials $(B_n(y;x,t))_{n\geq0}$ are orthogonal with respect to a Dirac measure $\delta_{\theta+\eta(t+\tau)-x}$ at the point $\theta+\eta(t+\tau)-x$, because the coefficient at $B_{n-2}$ in the recurrence \eqref{rekurencja_na_wiel_ort_B} vanishes ($[2n]_q=0$ for all $n\geq0$). Therefore, Theorem \ref{twierdzenie_o_generatorze_wiel} and Corollary \ref{twierdzenie_o_generatorze_ogolne} imply that the domain of the infinitesimal generator contains polynomials and bounded  continuous functions with bounded continuous second derivative $g$, and it takes a form:
\small
\begin{equation}\label{generator_q=-1}
	 \mathbf{A}_t(g)(x)= \left\{ \begin{array}{ll}
	 	\tfrac{1+\eta x}{2}g''(x) & \textrm{when $\theta+\eta(t+\tau)=2x$,}\\
	 	\tfrac{1+\eta x}{\theta+\eta(t+\tau)-2x}\left(\tfrac{g(\theta+\eta(t+\tau)-x)-g(x)}{\theta+\eta(t+\tau)-2x}-g'(x)\right)&  \textrm{when $\theta+\eta(t+\tau)\ne 2x$.}
	 \end{array} \right.
\end{equation}
\normalsize
Furthermore, the construction of bi-Poisson process $QH(\eta,\theta;0,0;-1)$ was presented when $1+\eta\theta\geq0$ in \cite[Subsection 3.2.]{bib_BrycMatysiakWesolowski_1}. Especially, there exists  $QH(\eta,\theta+\eta\tau;0,0;-1)$ with the parameters satisfying  \eqref{pierwsze_N}.\\
Surprisingly, the tedious calculations show that this process satisfies \eqref{def_harness} and \eqref{def_QH}. Therefore, $QH(\eta,\theta+\eta\tau;0,0;-1)$ is also $QH(\eta,\theta;0,\tau;-1)$.\\
Then, by reading out the transition probabilities, see  \cite[Subsection 3.2.]{bib_BrycMatysiakWesolowski_1}, it is easy to compute the infinitesimal generator directly and obtain exactly the formula from the second line in \eqref{generator_q=-1}.


\appendix
\section{The Askey scheme}
\setcounter{subsection}{0}
\setcounter{equation}{0}
\setcounter{theorem}{0}
\renewcommand{\theequation}{A.\arabic{equation}}
\renewcommand\thesubsection{A.\arabic{subsection}}
In this section we recall families of Askey-Wilson polynomials used in Section \ref{specjalne wlasnosci}. We pay special attention to their orthogonality measures.
\subsection{Askey-Wilson polynomials}\label{Askey-Wilson polynomials}
Let us define for $a,b,c,d\in\mathds{C}$  such that
\begin{equation}\label{warunkinaparametry}
    abcd, qabcd \not\in [1,\infty)
\end{equation}
and $|q|<1$ polynomials $(P_n(y))_{n\geq0}$ by a recurrence
$$2y P_n(y)=\widetilde{A}_nP_{n+1}(y)+B_nP_n(y)+\widetilde{C}_nP_{n-1}(y), \quad n\geq0,$$
with the initial conditions $P_{-1}\equiv 0$ and $P_0\equiv 1$,
where for $n\geq0$ we have
\begin{equation*}
    \begin{split}
    \widetilde{A}_n&:=\tfrac{A_n}{(1-ab q^n)(1-ac q^n)(1-ad q^n)},\\
    B_n&:=a+\tfrac{1}{a}-\tfrac{A_n}{a}-aC_n,\\
    \widetilde{C}_n&:=C_n(1-ab q^{n-1})(1-ac q^{n-1})(1-ad q^{n-1}),\\
    A_n&:=\tfrac{(1-abq^n)(1-acq^n)(1-adq^n)(1-abcdq^{n-1})}{(1-abcd q^{2n-1})(1-abcd q^{2n})},\\
    C_n&:=\tfrac{(1-q^n)(1-bcq^n)(1-bdq^n)(1-cdq^{n-1})}{(1-abcd q^{2n-2})(1-abcd q^{2n-1})}.
    \end{split}
\end{equation*}
Here $A_0$ and $C_0$ should be interpreted as $\tfrac{(1-ab)(1-ac)(1-ad)}{1-abcd}$ and $0$, respectively. Then polynomials $(P_n(y))_{n\geq0}$ are Askey-Wilson polynomials, see \cite[Section 14.1]{Koekoek}. Because the coefficients $\widetilde{A}_n$, $B_n$, $\widetilde{C}_n$ do not depend on the order of the parameters $a,b,c,d$, these polynomials are well defined also in the case when $a=0$.\\
Moreover, we can normalize the polynomials $(P_n(y))_{n\geq0}$ by a formula
$$p_n(y):=2^n\prod\limits_{k=0}^{n-1}\widetilde{A}_kP_n(y),\quad  n\geq0,$$
with convention that $\widetilde{A}_{-1}:=1$, to obtain that the polynomials $(p_n(y))_{n\geq0}$ satisfy
\begin{equation}\label{rekurencja_Akey_Wilson}
   y p_n(y)=p_{n+1}(y)+\tfrac{1}{2}B_np_n(y)+\tfrac{1}{4}A_{n-1}C_np_{n-1}(y), \quad n\geq0, 
\end{equation}
with $A_{-1}=1$.\\
Polynomials $(p_n(y))_{n\geq0}$ satisfy three-step recurrence, so there exists a moment functional that makes them orthogonal. However, it is difficult to give explicit conditions in terms of $a$, $b$, $c$, and $d$ when the orthogonality measure $\mu_{a,b,c,d}(\mathrm{d}y)$ for the moment functional exists. It is known only in some special cases, so let us present the results covering all our problems from Section \ref{specjalne wlasnosci}.\\
Denote
\begin{equation}\label{m1,m2}
    \begin{split}
    m_1&:=\sharp\big(\{ab, ac, ad, bc, bd, cd\}\cap [1,\infty)\big),\\
    m_2&:=\sharp\big(\{qab, qac, qad, qbc, qbd, qcd\}\cap [1,\infty)\big).
    \end{split}
\end{equation}
According to \cite[Lemma 3.1]{bib_BrycWesolowski_2010} when $a$, $b$, $c$, $d$ are either real or come in complex conjugate pairs and  satisfy \eqref{warunkinaparametry}, then the distribution $\mu_{a,b,c,d}$  exists only in the following cases:
\begin{enumerate}
    \item If $q\geq 0$ and $m_1=0$, then $\mu_{a,b,c,d}$ has only a continuous component.
    \item If $q < 0$ and $m_1=m_2=0$, then $\mu_{a,b,c,d}$  has only a continuous component.
    \item If $q\geq 0$ and $m_1 = 2$, then $\mu_{a,b,c,d}$  is well-defined if either $q = 0$ or the smaller of the two products that fall into $[1,\infty)$ is of the form $\tfrac{1}{q^N}$ , and in this latter case $\mu_{a,b,c,d}$  is a purely discrete measure with $N + 1$ atoms.
    \item If $q < 0$ and $m_1 = 2$, $m_2 = 0$, then $\mu_{a,b,c,d}$  is well defined if the smaller of the two products in $[1,\infty)$ equals $\tfrac{1}{q^N}$ with even $N$. Then $\mu_{a,b,c,d}$  is a purely discrete measure with $N + 1$ atoms.
    \item If $q < 0$, $m_1 = 0$ and $m_2 = 2$, then $\mu_{a,b,c,d}$ is well defined if the smaller of the two products in $[1,\infty)$ equals $\tfrac{1}{q^N}$ with even $N$. Then $\mu_{a,b,c,d}$  is a purely discrete measure with $N + 2$ atoms.
\end{enumerate}
Introducing for $w,w_1,\ldots,w_k\in\mathds{C}$ the following notation:
\begin{equation}\label{notacja}
    (w;q)_n:= \left\{ \begin{array}{ll}
1 & \textrm{when $n=0$},\\
\prod\limits_{j=0}^{n-1} (1-wq^j)& \textrm{when $n=1,2,\ldots$},\\
\prod\limits_{j=0}^{\infty} (1-wq^j)& \textrm{when $n=\infty$}
\end{array} \right.
\end{equation}
and 
$$(w_1,w_2,\ldots,w_k;q)_n:=(w_1;q)_n\cdot(w_2;q)_n\cdot\ldots\cdot(w_k;q)_n,$$
the probability measure $\mu_{a,b,c,d}$ can be written explicitly as
\begin{equation*}
    \mu_{a,b,c,d}(\mathrm{d}y)= f_{a,b,c,d}(y)\mathds{1}_{\{|y|<1\}}\mathrm{d}y + \sum\limits_{x\in F_{a,b,c,d}}\rho(x)\delta_x(\mathrm{d}y),
\end{equation*}
where for $\theta$ such that $y=\cos(\theta)$ we have
$$f_{a,b,c,d}(y):= \tfrac{(q, ab, ac, ad, bc, bd, cd;q)_\infty}{2\pi(abcd;q)_\infty\sqrt{1-y^2}}\left|\tfrac{(e^{2i\theta};q)_{\infty}}{(ae^{2i\theta},be^{2i\theta},ce^{2i\theta},de^{2i\theta};q)_{\infty}}\right|^2,$$
and $F_{a,b,c,d}$ is an empty or finite set of atoms that arise from each of the parameters $a$, $b$, $c$, $d$ with an absolute value larger than one.
For example, if $a\in(-\infty,-1)\cup(1,\infty)$, then the corresponding atoms are equal
$$x_k=\tfrac{aq^k+(aq^k)^{-1}}{2}$$
for $k=0,1,\ldots$ such that $|aq^k|>1$.
The probabilities of $x_k$ are then equal
\begin{equation*}
    \begin{split}
        \rho(x_0)&:=\tfrac{\left(\tfrac{1}{a^2}, bc, bd, cd;q\right)_{\infty}}{\left(\tfrac{b}{a}, \tfrac{c}{a}, \tfrac{d}{a}, abcd;q\right)_{\infty}},\\
        \rho(x_k)&:= \rho(x_0)\tfrac{\left(a^2, ab, ac, ad;q\right)_k(1-a^2q^{2k} )}{\left(q, \tfrac{qa}{b},\tfrac{qa}{c}, \tfrac{qa}{d};q\right)_k(1-a^2)} 
        \left(\tfrac{q}{abcd}\right)^k,  \quad k\geq1.
    \end{split}
\end{equation*}
The above formula must be rewritten when $abcd=0$. Especially, when $d=0$ we have
\begin{equation*}
        \rho(x_k)= \rho(x_0)\frac{\left(a^2, ab, ac;q\right)_k(1-a^2q^{2k})}{\left(q, \tfrac{qa}{b},\tfrac{qa}{c};q\right)_k(1-a^2)} (-1)^k q^{-\binom{k}{2}}
        \left(\tfrac{1}{a^2bc}\right)^k,  \quad k\geq1,
\end{equation*}
where by convention we put $\binom{1}{2}=0$.
\subsection{Big q-Jacobi polynomials}\label{q-Racah polynomials}
Let us consider  polynomials $(p_n(x))_{n\geq0}$ given by a recurrence
\begin{equation}\label{rekurencja q-Racah}
    x p_n(x)=p_{n+1}(x)+(1-(A_n+C_n))p_n(x)+A_{n-1}C_np_{n-1}(x), \quad n\geq0,
\end{equation}
with the initial conditions $p_{-1}\equiv 0$ and $p_0\equiv 1$,
where for $n\geq0$ we have
\begin{equation*}
    \begin{split}
    A_n&:=\tfrac{(1-a q^{n+1})(1-ab q^{n+1})(1-c q^{n+1})}{(1-ab q^{2n+1})(1-ab q^{2n+2})},\\
    C_n&:=-acq^{n+1}\tfrac{(1-q^n)(1-abc^{-1} q^n)(1-b q^n)}{(1-ab q^{2n})(1-ab q^{2n+1})}.
    \end{split}
\end{equation*}
Then polynomials $(p_n(y))_{n\geq0}$ are normalized Big q-Jacobi polynomials, see \cite[Section 14.5]{Koekoek}.\\
The orthogonality relation for $0 < aq < 1$, $0 \leq bq < 1$ and $c < 0$ is
\begin{equation*}
    \int_{cq}^{aq}w(x)p_m(x)p_n(x)\mathrm{d}_q(x)=h_n\delta_{mn},
\end{equation*}
where
$$w(x):=\tfrac{(a^{-1}x,c^{-1}x;q)_\infty}{(x,bc^{-1}x;q)_\infty},$$
$$h_n=aq(1-q)
\tfrac{(q,abq^2,a^{-1}c,ac^{-1}q;q)_\infty}{
(aq,bq,cq,abc^{-1}q;q)_\infty}
\tfrac{(1-abq)}{(1-abq^{2n+1})}
\tfrac{(q,aq,bq,cq,abc^{-1}q;q)_n}{(abq,abq^{n+1},abq^{n+1};q)_n}
(-acq^2)^nq^{\binom{n}{2}}$$
and
\begin{equation}\label{calka_dq}
    \int_{cq}^{aq}f(x)\mathrm{d}_q(x):=aq(1-q)\sum\limits_{k=0}^\infty f(aq^{k+1})q^k -cq(1-q)\sum\limits_{k=0}^\infty f(cq^{k+1})q^k.
\end{equation}
Above, we used the notation introduced in \eqref{notacja}. More information about Big q-Jacobi polynomials can be found in \cite[Section 14.5]{Koekoek}. In particular, it turns out that they can be obtained as a limit of specially reparameterized Askey-Wilson polynomials.
\subsection{Little q-Jacobi polynomials} Substituting $c qx$ instead of $x$ in $(p_n(x))_{n\geq0}$ given in \eqref{rekurencja q-Racah} and going with $c\to-\infty$ leads to the little q-Jacobi polynomials which, after normalization, satisfy the following recurrence:
\begin{equation}\label{rekurencja q-little}
    x w_n(x)=w_{n+1}(x)+(\widetilde{A}_n+\widetilde{C}_n)w_n(x)+\widetilde{A}_{n-1}\widetilde{C}_nw_{n-1}(x), \quad n\geq0,
\end{equation}
with $w_{-1}\equiv 0$ and $w_{0}\equiv 1$,
where
\begin{equation*}
    \begin{split}
        \widetilde{A}_n&:=q^n\tfrac{(1-a q^{n+1})(1-ab q^{n+1})}{(1-abq^{2n+1})(1-abq^{2n+2})},\\
        \widetilde{C}_n&:=aq^n\tfrac{(1-q^n)(1-bq^n)}{(1-abq^{2n})(1-abq^{2n+1})}.
    \end{split}
\end{equation*}
In this case, the orthogonality relation takes a form:
$$\sum\limits_{k=0}^\infty\tfrac{(bq;q)_k}{(q,q)_k}(aq)^kp_n(q^k)p_m(q^k)=\tfrac{(abq^2;q)_\infty}{(ab;q)_\infty}\tfrac{(1-abq)(aq^n)^n}{(1-abq^{2n+1})}\tfrac{(q,aq,bq;q)_n}{(abq,abq^{n+1},abq^{n+1};q)_n}\delta_{nm}$$
for $0<aq<1$ and $bq<1$. For more information on the Little q-Jacobi polynomials, see \cite[Section 14.12]{Koekoek}.
\subsection{Al-Salam-Carlitz I polynomials}\label{Al-Salam-Carlitz}
Let $a\in\mathds{R}$. We consider polynomials $(p_n(y))_{n\geq0}$ given by the following three-step recurrence:
\begin{equation}\label{rekurencja_Al-Salam-Carlitz}
    xp_n(x) = p_{n+1}(x)+(a+1)q^np_n(x)-aq^{n-1}(1-q^n)p_{n-1}(x), \quad n\geq0,
\end{equation}
with $p_{-1}\equiv0$ and $p_0\equiv 1$. Polynomials $(p_n(y))_{n\geq0}$ are called Al-Salam-Carlitz I polynomials, see \cite[Section 14.24]{Koekoek}.\\
For $a<0$, these polynomials are orthogonal and satisfy
\begin{equation*}
   \int\limits_a^1(qx,a^{-1}qx;q)_\infty p_n(x)p_m(x)\mathrm{d}_q(x)=(-a)^n(1-q)(q;q)_n(q,a,a^{-1}q;q)_\infty q^{{n\choose 2}}\delta_{mn},
\end{equation*}
where we used the notation introduced in \eqref{notacja} and \eqref{calka_dq}.

\section{Construction of quadratic harness $QH(\eta,\theta;0,\tau;q)$}
\setcounter{subsection}{0}
\setcounter{equation}{0}
\setcounter{theorem}{0}
\renewcommand{\theequation}{B.\arabic{equation}}
\renewcommand\thesubsection{B.\arabic{subsection}}
\renewcommand{\thetheorem}{B.\arabic{theorem}}
In this section we recall the construction of the bi-Poisson process and complete the missing parts of the proof from \cite{bib_BrycMatysiakWesolowski_1}. In addition, we derive three-term recurrence for orthogonal polynomials, whose orthogonality measure is the transition probability of $QH(\eta,\theta;0,\tau;q)$.
\subsection{Bi-Poisson process}\label{Bi-P}
Let $|q|<1$. The bi-Poisson process is a quadratic harness $QH(\eta,\theta;0,0;q)$. According to  \cite{bib_BrycMatysiakWesolowski_1}, the bi-Poisson process is well-defined under the  constraints:
\begin{equation}\label{zalozenia_parametry}
    1+\eta\theta\geq\max\{q,0\}.
\end{equation}
Condition \eqref{zalozenia_parametry} is sufficient for the existence of this process but not involves all possible combinations of parameters. Moreover, the proof given in \cite{bib_BrycMatysiakWesolowski_1}  covers only the case $1+\eta\theta>\max\{q,0\}$. 
The construction of the process was based on orthogonal polynomials ${Q_n(y;x,t,s)}_{n\geq0}$ given by the following three-step recurrence:
\begin{equation}
\label{rekurencja_na_wiel_Q}
\begin{split}
yQ_n(y;x,t,s)&=Q_{n+1}(y;x,t,s)+\mathcal{A}_{n}(x,t,s)Q_n(y;x,t,s)\\
&+\mathcal{B}_{n}(x,t,s)Q_{n-1}(y;x,t,s),\quad n\ge 0
\end{split}
\end{equation}
with $Q_{-1}(y;x,t,s)=0$, $Q_{0}(y;x,t,s)=1$ and the coefficients $\mathcal{A}_0(x,t,s)=x$, $\mathcal{B}_0(x,t,s)=0$, and
\begin{equation*}
\begin{split}
\mathcal{A}_{n}(x,t,s)&=q^nx+[n]_q(\eta t +\theta-\eta s(1+q)q^{n-1}),\\
\mathcal{B}_{n}(x,t,s)&=[n]_q(t-sq^{n-1})\{1+\eta x q^{n-1}+\eta[n-1]_q(\theta-\eta s q^{n-1})\},\qquad n\ge 1.
\end{split}
\end{equation*}	
The transition probabilities $\mathds{Q}_{s,t}(x,\mathrm{d}y)$, $0\leq s<t$, are defined as unique probabilistic orthogonality measures of polynomials $(Q_n(y;x,t,s))_{n\geq0}$ for
$$x\in \mathcal{U}_s:=\bigcap_{n=1}^\infty\bigg\{y\in\mathds{R}: \prod_{k=1}^{n} \mathcal{B}_k(y,t,s)\geq0\bigg\}.$$
Note that $\mathcal{U}_s$ does not depend on $t$, because $t>s\geq0$ and $|q|<1$. In the result, we can rewrite $\mathcal{U}_s$ as
\begin{equation*}
\mathcal{U}_s=\bigcap_{n=1}^\infty\bigg\{y\in\mathds{R}: \prod_{k=1}^{n} \mathcal{C}_k(y,s)\geq0\bigg\}
\end{equation*}
with
\begin{equation*}
    \mathcal{C}_n(y,s):=1+\eta yq^{n-1}+\eta[n-1]_q(\theta-\eta sq^{n-1}), \quad n\geq1.
\end{equation*}
Further analysis in \cite{bib_BrycMatysiakWesolowski_1} required the sharp inequality $1+\eta \theta>\max\{0,q\}$. Then it was proved that the support of $X_s$, $s>0$, consists of an interval
\begin{equation}\label{przedzial}
\bigg[\tfrac{\theta+\eta s-2\sqrt{s}\sqrt{\eta\theta+1-q}}{1-q},\tfrac{\theta+\eta s+2\sqrt{s}\sqrt{\eta\theta+1-q}}{1-q}\bigg]    
\end{equation}
and possibly a finite set of discrete points:
\begin{itemize}
    \item for $0<s<\tfrac{\theta^2}{\eta\theta+1-q}$ 
    \begin{equation}\label{x_k_pierwszy}
        x_k=-\tfrac{1}{1-q}\left(\theta q^k+s\tfrac{\eta\theta+1-q}{\theta q^k}-(\theta+\eta s)\right), 
    \end{equation}
    where $k=0,1,\ldots \text{ such that } s(\eta\theta+1-q)<q^{2k}\theta^2$,
    \item for $\eta^2s>\eta\theta+1-q$ 
    \begin{equation}\label{x_k_drugi}
        x_k=-\tfrac{1}{1-q}\left(\eta s q^k+\tfrac{\eta\theta+1-q}{\eta q^k}-(\theta+\eta s)\right),  
    \end{equation}
    where $k=0,1,\ldots \text{ such that } \eta\theta+1-q<s\eta^2q^{2k}$.
\end{itemize}
Unfortunately, despite being included in the statement of Theorem 1.2 and Corollary 1.3 in \cite{bib_BrycMatysiakWesolowski_1}, the construction of the bi-Poisson in case $1+\eta\theta=\max\{0,q\}$, which will be needed in the sequel, is missing. We will fill this gap in Theorem \ref{bip} below. As we will see below, this boundary case is quite different from the case when $1+\eta \theta>\max\{0,q\}$.
\begin{theorem}\label{bip}
Assume that
$$
1+\eta\theta=\max\{q,0\}.
$$
Then the bi-Poisson process $\mathrm{QH}(\eta,\theta;0,0;q)$ exists.
\end{theorem}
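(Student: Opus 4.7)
The plan is to construct the bi-Poisson process at the boundary by approximation from the strictly interior region already handled in \cite{bib_BrycMatysiakWesolowski_1}. I would pick a sequence $\theta_m\to\theta$ with $1+\eta\theta_m>\max\{q,0\}$ strictly; by that earlier result, each process $X^{(m)}=QH(\eta,\theta_m;0,0;q)$ exists, with transition probabilities $\mathds{Q}^{(m)}_{s,t}(x,\mathrm{d}y)$ given as the unique probabilistic orthogonality measure of the polynomials $Q_k(y;x,t,s)$ from recurrence \eqref{rekurencja_na_wiel_Q} evaluated at $\theta:=\theta_m$.

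The core analytic step is to show that $\mathds{Q}^{(m)}_{s,t}(x,\cdot)\Rightarrow \mathds{Q}_{s,t}(x,\cdot)$ for every fixed $0\le s<t$ and $x\in\mathds{R}$. The coefficients $\mathcal{A}_k(x,t,s)$ and $\mathcal{B}_k(x,t,s)$ depend continuously on $\theta$, and since $|q|<1$ one has $0\le [k]_q\le (1-|q|)^{-1}$ and $|q|^{k-1}\to 0$, so $\mathcal{A}_k^{(m)}$ and $\mathcal{B}_k^{(m)}$ are uniformly bounded in $m$ and $k$. A standard spectral bound for bounded Jacobi matrices confines every $\mathds{Q}^{(m)}_{s,t}(x,\cdot)$ to a common compact interval, making the Hamburger moment problem determinate. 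Convergence of the recurrence coefficients forces convergence of all moments, and weak convergence follows. Favard's theorem applied to the limit coefficients (whose non-negativity passes to the limit, possibly after truncation at the first $n$ where $\mathcal{B}_n$ vanishes) confirms that $\mathds{Q}_{s,t}(x,\cdot)$ is a genuine probability measure.

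Chapman-Kolmogorov holds for each $X^{(m)}$, and because the approximating measures share a uniformly compact support, the identity transfers to the limit when tested against polynomial observables, giving a consistent Markov family $\{\mathds{Q}_{s,t}\}$. Kolmogorov's extension theorem then produces a process $X$ with these transitions and $X_0=0$, and the defining quadratic-harness identities \eqref{srednia+cov}, \eqref{def_harness}, \eqref{def_QH} are polynomial in $\theta$ and in conditional moments of order at most two, so they transfer to $X$, proving $X\sim QH(\eta,\theta;0,0;q)$. The hardest part will be tracking the qualitatively different ways in which the support of $\mathds{Q}_{s,t}(x,\cdot)$ degenerates at the boundary: for $q>0$ with $\eta\theta+1-q=0$ the continuous interval \eqref{przedzial} collapses to a single point while the discrete atoms \eqref{x_k_pierwszy}--\eqref{x_k_drugi} must be followed through the limit; for $q=0$ with $1+\eta\theta=0$ a direct computation shows $\mathcal{B}_n\equiv 0$ for $n\ge 2$, forcing a two-atom transition law; for $q<0$ with $1+\eta\theta=0$ the first vanishing of a factor $\mathcal{C}_n$ determines a truncated, purely discrete measure. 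In every case the uniform spectral bound prevents escape of mass, so Favard's theorem applied to the (possibly truncated) limiting Jacobi coefficients yields a bona fide probability measure and the approximation argument closes.
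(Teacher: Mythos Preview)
Your limiting strategy is quite different from the paper's proof, which splits into the cases $1+\eta\theta=q>0$ and $1+\eta\theta=0\ge q$ and in each case identifies $Q_n(y;0,s,0)$ (and then $Q_n(y;x_k,t,s)$) explicitly as rescaled Al-Salam--Carlitz~I, Big $q$-Jacobi, or finitely truncated polynomials, reads off the orthogonality measure, and checks by hand that its atoms lie in $\mathcal{U}_s$ so that the analog of \cite[Lemma~2.4]{bib_BrycMatysiakWesolowski_1} goes through. That explicit verification of the support inclusion is essentially the entire content of the boundary case.

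Your sketch glosses over precisely this point. The obstacle at $1+\eta\theta=\max\{q,0\}$ is to show $\mathrm{supp}(X_s)\subseteq\mathcal{U}_s$, so that $\mathds{Q}_{s,t}(x,\cdot)$ is a probability measure at every $x$ the process actually visits; the uniform spectral bound you invoke prevents escape of mass but says nothing about the sign of $\prod_k\mathcal{B}_k(x,t,s)$. You assert that non-negativity ``passes to the limit'', but the argument as written handles only a \emph{fixed} $x$ already known to lie in $\mathcal{U}_s^{(m)}$ for large $m$; nothing in the proposal explains why every support point of the weak limit $\mathds{Q}_{0,s}(0,\cdot)$ arises this way. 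The gap is repairable---for $x^*$ in the limit support, Portmanteau yields $x_m\in\mathrm{supp}\,\mathds{Q}^{(m)}_{0,s}(0,\cdot)\subset\mathcal{U}_s^{(m)}$ with $x_m\to x^*$, and continuity of each $\mathcal{C}_k$ in $(x,\theta)$ then forces $x^*\in\mathcal{U}_s$---but as written the key inclusion is asserted rather than proved. A second, smaller gap: the harness identities \eqref{def_harness}--\eqref{def_QH} are stated for the full past--future field $\mathcal{F}_{s,u}$, not $\sigma(X_s,X_u)$, so before your ``polynomial in $\theta$'' transfer applies you must first use the Markov property of the limit process to reduce the conditioning.
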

\begin{proof} We use the three-term recurrence \eqref{rekurencja_na_wiel_Q}, which remains valid in the case we consider. The proof splits into two cases: $1+\eta \theta=q>0$ and $1+\eta \theta=0\ge q$.
\begin{itemize}
\item If $1+\eta \theta=q>0$, we get that $\eta\ne0$ (since $|q|<1$). Then, the calculation shows that for $s>0$ 
$$Q_n(y;0,s,0)=\tfrac{1}{\beta^n}p_n(\beta y+\alpha),\quad n\geq0,$$
where $\alpha=\tfrac{1-q+\eta^2s}{1-q}$, $\beta=\eta$ and $(p_n)_{n\geq0}$ are normalized Al-Salam-Carlitz of the first type polynomials with parameter $a=-\tfrac{s\eta^2}{1-q}<0$, see Section \ref{Al-Salam-Carlitz} in the Appendix. As a result, we can read out that the orthogonality measure $\mathds{Q}_{0,s}(0,\mathrm{d}y)$ for polynomials  $\{Q_n(y;0,s,0):n\geq0\}$ exists and is purely atomic with atoms 
\begin{equation}\label{x_k_jeden_prim}
x_k=-\tfrac{1}{1-q}\left(\theta q^k-(\theta+\eta s)\right), \quad k=0,1,2,\ldots,
\end{equation}
and
\begin{equation}\label{x_k_drugi_prim}
x_k=-\tfrac{1}{1-q}\left(\eta s q^k-(\theta+\eta s)\right), \quad k=0,1,2,\ldots,
\end{equation}
which coincide with \eqref{x_k_pierwszy} and \eqref{x_k_drugi} in case when $1+\eta\theta=q$.  Since $\mathcal{C}_n(\cdot,s)$ is a continuous function, we get that $x_k$ given by \eqref{x_k_jeden_prim} or \eqref{x_k_drugi_prim} are in $\mathcal{U}_s$. As a result, the analog of Lemma $2.4.$ in \cite{bib_BrycMatysiakWesolowski_1} holds for $1+\eta\theta=q>0$. Thus, there exists an appropriate quadratic harness since all other elements of the proof in \cite{bib_BrycMatysiakWesolowski_1}  remain unchanged.\\
Moreover, for $t>s>0$ and $x_k$ given by \eqref{x_k_jeden_prim} we have 
$$Q_n(y;x_k,t,s)=\tfrac{1}{\beta^n}p_n(\beta y+\alpha),\quad n\geq0,$$
where $\alpha=-\tfrac{\theta+\eta t}{1-q}$, $\beta=\tfrac{s\eta(1-q)}{t((\eta x_k+1)(1-q)-s\eta^2)}$ and $(p_n)_{n\geq0}$ are the normalized Big $q$-Jacobi polynomials with parameters $a=\tfrac{s}{tq}$, $b=0$ and $c=-\tfrac{s\eta^2}{q((\eta x_k+1)(1-q)-s\eta^2)}=-\tfrac{s\eta^2}{(1-q)q^{k+1}}<0$, see Section \ref{q-Racah polynomials}. In this case, we can easily read out the formula for $\mathds{Q}_{s,t}(x_k,\mathrm{d}y)$, which is also purely atomic.\\
For $t>s>0$ and $x_k$ given by \eqref{x_k_drugi_prim} we have that
$$\mathcal{C}_n(x_k,s)=\tfrac{\eta^2s}{1-q}q^{2(n-1)}(1-q^{k-n+1}), \quad n\geq1.$$
Then for $n<k+1$ and $n=k+1$ we have $\mathcal{C}_n(x_k,s)>0$ and $\mathcal{C}_n(x_k,s)=0$, respectively. Consequently, $\mathds{Q}_{s,t}(x_k,\mathrm{d}y)$ is a probability measure supported on distinct zeros of the polynomial $Q_{k+1}(y;x_k,t,s)$, see \cite[Theorem A.1.]{bib_BrycWesolowski_2010}.
\item If $1+\eta \theta=0\geq q$, then $\mathcal{B}_1(0,s,0)=t$ and $\mathcal{B}_2(0,s,0)=0$, $s>0$, so according to \cite[Theorem A.1.]{bib_BrycWesolowski_2010}, the probabilistic orthogonality measure $\mathds{Q}_{0,s}(0,\mathrm{d}y)$ is supported on two points 
$$x_{1,2}=\tfrac{\eta s+\theta\pm\sqrt{(\eta s+\theta)^2+4s}}{2},$$
which are the zeros of the polynomial $Q_{2}(y;0,s,0)$. Then, 
$$\mathcal{C}_{1}(x_{1,2},s)=\tfrac{\eta^2s+1\pm |\eta^2s+1|}{2}$$
and
$$\mathcal{C}_{2}(x_{1,2},s)=q\tfrac{-1-\eta^2s\pm|\eta s+1|}{2}.$$
As a result, both points are in $\mathcal{U}_s$ and the analog of Lemma 2.4. in \cite{bib_BrycMatysiakWesolowski_1} holds for $1+\eta\theta=0\geq q$, and the quadratic harness with proper parameters exists.\\
Furthermore, for $0<s<t$ we have $\mathcal{C}_{1}(x_{2},s)=0$, so $\mathcal{B}_{1}(x_{2},t,s)=0$ and \cite[Theorem A.1.]{bib_BrycWesolowski_2010} implies that $\mathds{Q}_{s,t}(x_2,\mathrm{d}y)$ is supported only on $x_2$, so $x_2$ is an absorbing state.\\
Since $\mathcal{C}_{1}(x_{1},s)>0$ and $\mathcal{C}_{2}(x_{1},s)=0$, hence for $0<s<t$ we obtain that $\mathcal{B}_{1}(x_{1},t,s)>0$ and $\mathcal{B}_{2}(x_{1},t,s)=0$. From \cite[Theorem A.1.]{bib_BrycWesolowski_2010} we conclude that the measure $\mathds{Q}_{s,t}(x_1,\mathrm{d}y)$ is supported on two distinct points, the zeros of $Q_2(y;x_1,t,s)$.
\end{itemize}
\end{proof}
\subsection{Quadratic harness $QH(\eta;\theta;0,\tau;q)$}\label{rozdzial_QH_z_tau}
The construction of $QH(\eta;\theta;0,\tau;q)$ was done in \cite[Theorem 1.1]{bib_BrycWesolowski_2010} when $BC$, $BD$, $qBC$ and $qBD$ are in $\mathds{C}\setminus[1,\infty)$, where
\begin{equation}\label{ABCD}
    A=0, \quad B=-\tfrac{\eta}{\sqrt{1-q+\eta\widetilde{\theta}}},\quad C=-\tfrac{\widetilde{\theta}+\tfrac{\eta\tau}{1-q}-\sqrt{\theta^2-4\tau}}{2\sqrt{1-q+\eta\widetilde{\theta}}} \quad \text{and}\quad D=-\tfrac{\widetilde{\theta}+\tfrac{\eta\tau}{1-q}+\sqrt{\theta^2-4\tau}}{2\sqrt{1-q+\eta\widetilde{\theta}}}
\end{equation}
and $\widetilde{\theta}$ is given in \eqref{theta_falka}.\\
In this case, the quadratic harness $(X_t)_{t\geq0}$ is a linear transformation of an Askey-Wilson process $(Y_t)_{t\geq0}$ with the parameters \eqref{ABCD}, i.e.,
\begin{equation}\label{X a Y}
    X_t:=\tfrac{2\sqrt{T(t)}Y_{T(t)}-Bt-C-D}{\sqrt{(1-q)(1-BC)(1-BD)}},\qquad t\geq0
\end{equation}
and $T(t)=t+CD$, see \cite[(2.28)]{bib_BrycWesolowski_2010}.
Since polynomials $(\overline{w}_n(y;a,b,c,d))_{n\geq0}$ satisfying \cite[(3.12)]{bib_BrycWesolowski_2010} are orthogonal to uniquely determined transition probabilities of $(Y_t)_{t\geq0}$, we can easily read out polynomials $(\widetilde{Q}_n(y;x,t,s))_{n\geq0}$ which are orthogonal to the transition probabilities of $(X_t)_{t\geq0}$. Indeed, formula \eqref{X a Y} implies that for $n\geq0$
\begin{equation}\label{jak_otrzymac_wielomiany}
    \widetilde{Q}_{n}(y;x,t,s)=\tfrac{1}{u^n}\overline{w}_n\left(uy+w;a,b,c,d\right),
\end{equation}
where $t\geq s\geq0$, $u=\tfrac{2\sqrt{T(t)}}{\sqrt{(1-q)(1-BC)(1-BD)}}=\tfrac{2\sqrt{\tfrac{\alpha_t}{1-q}}\sqrt{\eta\widetilde{\theta} +1-q}}{1-q}$, $w=-\tfrac{Bt+C+D}{\sqrt{(1-q)(1-BC)(1-BD)}}=\tfrac{\widetilde{\theta}+\tfrac{\eta\alpha_t}{1-q}}{1-q}$, and parameters $a$, $b$, $c$ and $d$ satisfy
\begin{equation}\label{parametry_dla_wiel_martyngalowych}
    a=A\sqrt{T(t)}, \quad b=B\sqrt{T(t)}, \quad c=C/\sqrt{T(t)}, \quad d=D/\sqrt{T(t)}
\end{equation}
when $s=0$ and $x=0$
or
$$a=A\sqrt{T(t)}, \quad b=B\sqrt{T(t)},\quad c=(z_x+\sqrt{1-z_x^2})\sqrt{\tfrac{T(s)}{T(t)}}, \quad d=(z_x+\sqrt{1-z_x^2})\sqrt{\tfrac{T(s)}{T(t)}},$$
when $s>0$ and $z_x=\tfrac{\sqrt{(1-q)(1-BC)(1-BD)}}{2T(s)}x+\tfrac{Bt+C+D}{2T(s)}$. Thus, polynomials $(\widetilde{Q}_n(y;x,t,s))_{n\geq0}$ satisfy the following three-step recurrence:
\begin{equation}
\label{rekurencja_na_wiel_Q_falka}
\begin{split}
y\widetilde{Q}_n(y;x,t,s)=\widetilde{Q}_{n+1}(y;x,t,s)+\mathcal{\widetilde{A}}_{n}(x,&t,s)\widetilde{Q}_n(y;x,t,s)\\
&+\mathcal{\widetilde{B}}_{n}(x,t,s)\widetilde{Q}_{n-1}(y;x,t,s), \qquad n \geq0,
\end{split}
\end{equation}
with $\widetilde{Q}_{-1}(y;x,t,s)=0$, $\widetilde{Q}_{0}(y;x,t,s)=1$ and the coefficients are given by
\begin{equation*}
\begin{split}
\mathcal{\widetilde{A}}_{n}(x,t,s)&=q^nx+[n]_q(\eta t +\theta+\eta\tau([n]_q+[n-1]_q)-(1+q)q^{n-1}s\eta),\\
\mathcal{\widetilde{B}}_{n}(x,t,s)&=[n]_q(t-sq^{n-1}+\tau[n-1]_q)\{1+\eta x q^{n-1}+\eta[n-1]_q(\theta+\eta\tau[n-1]_q-s\eta q^{n-1})\}.
\end{split}
\end{equation*}	
For $n=0$ these formulas should be interpreted as $\mathcal{\widetilde{A}}_{0}(x,t,s)=x$ and $\mathcal{\widetilde{B}}_{0}(x,t,s)=0$.
Especially $P_n(y;t):=\widetilde{Q}_n(y;0,t,0)$, $n\geq0$, are martingale polynomials for $QH(\theta,\eta;0,\tau;q)$, compare with  \cite[Theorem 4.5]{bib_BrycMatysiakWesolowski} and they satisfy:
\begin{equation*}
   \begin{split}
        yP_n(y; t)= P_{n+1}(y; t)  &+ (\eta t + \theta + ([n]_q + [n-1]_q )\eta\tau) [n]_q P_n(y; t)\\
        &+ (t + \tau [n-1]_q ) (1 + [n-1]_q \eta\theta + [n-1]^2_q \eta^2\tau)[n]_q P_{n-1}(y; t), \quad n \geq 0,
    \end{split}
\end{equation*}
with $P_{-1}(y;t)=0$ and $P_0(y;t)=1$. Moreover, since they come from the Askey-Wilson polynomials (see \eqref{jak_otrzymac_wielomiany}) with parameters \eqref{parametry_dla_wiel_martyngalowych}, we can read out the support of the orthogonal measures for $(P_n(y;t))_{n\geq0}$:
\begin{itemize}
   \item When $\theta^2< 4\tau$, then $C$ and $D$ are complex conjugates, so the process exists and 
    $$|c|^2=|d|^2=cd=\tfrac{\tau}{\alpha_t}<1,$$
    Furthermore, $|b|< 1$ if and only if $\eta^2t< 1-q+\eta\theta$.\\
    Therefore  \cite[Lemma 3.1]{bib_BrycWesolowski_2010} with linear transformation \eqref{jak_otrzymac_wielomiany} imply that for $\eta^2t< 1-q+\eta\theta$ the orthogonal measure  for after  $\{P_n(y;t):n\geq0\}$ has only continuous part and the support of $QH(\eta,\theta;0,\tau;q)$  at time $t\geq0$ satisfying $\eta^2t\leq 1-q+\eta\theta$ is
    \small
    \begin{equation}\label{przedzial_uw}
    [-u+w,u+w]=\left[\tfrac{\widetilde{\theta}+\eta \tfrac{\alpha_t}{1-q}-2\sqrt{\tfrac{\alpha_t}{1-q}}\sqrt{\eta\widetilde{\theta}+1-q}}{1-q},\tfrac{\widetilde{\theta}+\eta \tfrac{\alpha_t}{1-q}+2\sqrt{\tfrac{\alpha_t}{1-q}}\sqrt{\eta\widetilde{\theta}+1-q}}{1-q}\right].
   \end{equation}
   \normalsize
    For $\eta^2t\geq 1-q+\eta\theta$, in addition to a continuous part, we also have in the support discrete points, which are given by 
    $$\tfrac{u}{2}(b q^k+\tfrac{1}{b}q^{-k})+w$$
    for $k=0,1,2,\ldots$ satisfying $\eta^2\alpha_tq^{2k}>(1-q)(1-q+\eta\widetilde{\theta})$.
    \item  When $\theta^2\geq 4\tau$, then $B$, $C$ and $D$ are real. Moreover, if all these parameters are less than $1$, then $QH(\eta,\theta;0,\tau;q)$ exists. If none of $BC$, $BD$, $qBC$ and $qBD$ is at least $1$, then process exists and
    $$cd=\tfrac{\tau}{\alpha_t}<1,\quad  bd=BD<1,\quad  cd=CD<1,$$
    $$qcd=q\tfrac{\tau}{\alpha_t}<1,\quad  qbd=qBD<1,\quad  qcd=qCD<1.$$
    According to \cite[Lemma 3.1]{bib_BrycWesolowski_2010}, we have $m_1=m_2=0$ and orthogonal measure for $(P_n(y;t))_{n\geq0}$ has in this case only continuous part which is supported on \eqref{przedzial_uw}.
\end{itemize}

\section*{Acknowledgements}
This research was supported by grant Beyond POB II no. 1820/366/Z01/2021 within the Excellence Initiative: Research University (IDUB) programme of the Warsaw Univ. of Technology, Poland.

\end{document}